\let\oldtocsection=\tocsection
\let\oldtocsubsection=\tocsubsection
\let\oldtocsubsubsection=\tocsubsubsection
\renewcommand{\tocsection}[2]{\hspace{0em}\oldtocsection{#1}{#2}}
\renewcommand{\tocsubsection}[2]{\hspace{1em}\oldtocsubsection{#1}{#2}}
\renewcommand{\tocsubsubsection}[2]{\hspace{2em}\oldtocsubsubsection{#1}{#2}}
\newcounter{mnote}
\newcounter{mnoteE}
\numberwithin{equation}{section}
\begin{document}
\title [Data Assimilation and a determining form for KdV]{Determining form and data assimilation algorithm for weakly damped and driven Korteweg-de Vries equaton- Fourier modes case}
\date{October 8, 2015}
\keywords{KdV equation, determining forms, determining modes, inertial manifolds, data assimilation, downscaling, synchronization}
%
\author{Michael S. Jolly}
\address[Michael S. Jolly]{Department of Mathematics\\
                Indiana University, Bloomington\\
        Bloomington , IN 47405, USA}
\email[Michael S. Jolly]{msjolly@indiana.edu}
\author{Tural Sadigov}
\address[Tural Sadigov]{Department of Mathematics and Sciences\\
                SUNY Polytechnic Institute\\
        Utica, NY 13502, USA}
\email[Tural Sadigov]{sadigot@sunyit.edu}
\author{Edriss S. Titi}
\address[Edriss S. Titi]{Department of Mathematics, Texas A\&M University, 3368 TAMU, College Station, TX 77843-3368, USA. Also: The Department of Computer Science and Applied Mathematics\\
The Weizmann Institute of Science, Rehovot 76100, Israel.}
\email[Edriss S. Titi]{titi@math.tamu.edu and edriss.titi@weizmann.ac.il}

\maketitle{}
\begin{abstract}
We show that the global attractor of a weakly damped and driven Korteweg-de Vries equation (KdV) is embedded in the long-time dynamics of an ordinary differential equation called a determining form. In particular, there is a one-to-one identification of the trajectories in the global attractor of the damped and driven KdV and the steady state solutions of the determining form. Moreover, we analyze a data assimilation algorithm (down-scaling) for the weakly damped and driven KdV. We show that given a certain number of low Fourier modes of a reference solution of the KdV equation, the algorithm recovers the full reference solution at an exponential rate in time.   
\end{abstract}


\section{Introduction}

The Korteweg-de Vries equation
	\begin{align}
		u_t+uu_{x}+u_{xxx}=0. \label{kdv}
	\end{align}
was derived as a model of unidirectional propagation of water waves with small amplitude in a channel. It was first introduced by Bousinessq and then reformulated by Diederik Korteweg and Gustav de Vries in 1885. The function $u(x,t)$ in $\eqref{kdv}$ represents the elongation of the wave at time $t$ and position $x$. The solutions of this nonlinear and dispersive equation are solitary waves. In physical applications, however, one can expect some dissipation of energy, as well as external excitation. To account for these effects, damping and forcing terms are added to the model
	\begin{align}
		u_t+uu_{x}+u_{xxx}+\gamma u=f. \label{KdV}
	\end{align}
Existence and the uniqueness of the solution of the damped and driven Korteweg-de Vries (KdV) equation subject to the boundary conditions
	\begin{align}
		u(t, x)=u(t, x+L), \qquad \forall \text{ } (t, x)\in \mathbb{R}\times \mathbb{R}, \label{boundary}
	\end{align}
can be shown by adjusting the methods used for undamped KdV in \cite{BonaSmith}, \cite{Lions} or \cite{Temam1}. The existence of the weak global attractor $\mathcal{A}$, for $\eqref{KdV}$-$\eqref{boundary}$, was shown in \cite{Gh2}, and the strong global attractor  in $H^{2}$  was shown in  \cite{Gh3}.  In particular, it has been shown in \cite{Gh3} that there exist a constant $R$, that depends only on $\gamma$ and $|f|_{H^{2}}$, such that
	\begin{align}
		\sup_{s\in \mathbb{R}} |u(s)|_{H^{2}}\leq R, \label{Rkdv1}
	\end{align}
for every $u(\cdot)\subset \mathcal{A}$. We observe that the estimates detailed in section 4, below, can be followed almost line by line in order to obtain an explicit bound for $R$.

For many strongly dissipative PDE's, capturing the attractor by a finite system of ordinary differential equations is achieved by restricting the equation to an inertial manifold, as is done for Kuramoto-Sivashinsky, Ginzburg-Landau and certain reaction-diffusion equations (see, e.g., \cite{TemamDyn} and references therein). An inertial manifold is a finite dimensional Lipschitz positively invariant manifold which attracts all the solutions at an exponential rate.  A sufficient condition for the existence of an inertial manifolds is the presence of large enough gaps in the spectrum of the linear dissipative operator, i.e.~the presence of separation of scales in the underlying dynamics. The existence of inertial manifolds is still out of reach for various dissipative equations, including the two-dimensional Navier-Stokes equations, and the damped and driven KdV equation \eqref{KdV}-\eqref{boundary}. Our aim here is  to capture the attractor in $H^{2}$, of the damped and driven KdV, by the dynamics of an ordinary differential equation, called  a \emph{determining form}, which is defined in the phase space of trajectories.

A determining form is found for the 2D Navier-Stokes equations (NSE) in \cite{Form1} by using finitely many determining modes. In that work, the trajectories in the attractor of the 2D NSE are identified with traveling wave solutions of the determining form. Another type of  determining form is found for the 2D NSE by the same authors in  \cite{Form2}. The steady state solutions of this second kind of determining form are precisely the trajectories in the global attractor of the 2D NSE. Dissipativity (viscosity) plays a fundamental role in establishing a determining form for this equation.

In contrast, the weakly damped and driven nonlinear Schr\"odinger equation (NLS) and weakly damped and driven KdV are dispersive equations. They are not strongly dissipative due to the absence of viscosity. To embed the attractors of these systems in the long time dynamics of ordinary differential equations requires different techniques. Recently, we have shown that a determining form of the second kind exists for the damped and driven NLS (see \cite{JST}) using a feedback control term involving the Fourier projection of a trajectory in the attractor. In this paper we adapt the approach in \cite{Form2} and \cite{JST} for the KdV.
As in \cite{JST} the analysis here uses compound functionals motivated by the Hamiltonian structure of the corresponding systems.

The idea for determining forms starts with the property of determining modes (see \cite{modes}). A projector $P$ is said to be determining if whenever $u_{1}(\cdot), u_{2}(\cdot) \subset \mathcal{A}$ have the same projection $Pu_{1}(t)=Pu_{2}(t)$ for all $t\in \mathbb{R}$, they are in fact the same solution. A determining projector $P$ defines a map $W$ on the set $\mathcal{S}=\{Pu(\cdot) | u(\cdot)\subset \mathcal{A} \}$. A key step in constructing a determining form is to extend this map to a function space. If $P=P_{N}$ is the projection onto the first $N$ Fourier modes, the number $N$ is called the number of determining modes. Like the dimension of $\mathcal{A}$, $N$ serves as a measure of the complexity of the flow, and the resolution required to capture it. We give an estimate for $N$ in terms of the damping parameter $\gamma$ and the forcing term $f$ for the KdV.

The analysis of the determining form is akin to that for our data assimilation (down-scaling) algorithm. Data assimilation refers to the injection of coarse grain observational data into the model to drive the system toward an accurate solution (see e.g., \cite{Korn}, \cite{Olson_Titi_2003}, \cite{Hayden_Olson_Titi}, \cite{AJT}, \cite{Hakima}, and for computational study of continuous data assimilation, \cite{GOT}). The proof that this works for the KdV is similar to extending the $W$ map except instead of taking an initial time to $-\infty$, we take the time to $+\infty$. Indeed the feedback control approach to determining forms was inspired by that approach to data assimilation (down-scaling) first taken in \cite{AzOT} and \cite{AzT}. As in \cite{JST}, we use a `reverse' Poincar\'e type inequality to extend the $W$ map and show that it is Lipschitz. For the data assimilation algorithm, we use a different technique which involves combining two differential inequalities.

\section{Preliminaries}
Consider the 1D damped and driven KdV  equation
	\begin{align*}
		u_t+uu_{x}+u_{xxx}+\mathcal{L}(u)=f.
	\end{align*}
The damping term $\mathcal{L}(u)$ could have different forms such as $-\nu u_{xx}$ or $\gamma u$. The case where $\mathcal{L}(u)=-\nu u_{xx}$ constitutes stronger dissipation. The analysis for a data assimilation algorithm and a determining form for the case $\mathcal{L}(u)=-\nu u_{xx}$ is very similar to the analysis in \cite{Form2}, and will not be discussed here. In this paper, we will focus on the case where $\mathcal{L}(u)=\gamma u$. Thus, our equation is
	\begin{align*}
		u_t+uu_{x}+u_{xxx}+\gamma u=f,
	\end{align*}
subject to periodic boundary conditions
	\begin{align*}
		u(t, x)=u(t, x+L), \qquad \forall \text{ } (t, x)\in \mathbb{R}\times \mathbb{R},
	\end{align*}
and initial value $u(0, x)=u_0(x)$, where $0<L<\infty$ and $\gamma >0 $. Let $0\leq k< \infty$. We denote by $H^k[0, L]$ (or simply $H^k$) the Sobolev space of order $k$,
$$
H^k[0, L]:=\left \{ u\in L^2[0, L]:    D^\alpha u\in L^2[0, L]\,\, \text{for} \,\, 0\leq \alpha \leq k \right \},
$$
and by $\dot{H}_{\text{\text{per}}}^k$, the subspace of $H^k$ consisting of functions which are periodic in $x$, with period $L$, and have spatial mean zero. Note that $\dot{H}_{\text{per}}^0[0, L]= \dot{L}_{\text{per}}^2[0, L]$. We assume that $u_0\in \dot{H}_{\text{per}}^2$, $f$ is time independent, and $f\in \dot{H}_{\text{per}}^2$. It is easy to see that if $u_{0}$ and $f$ are with spatial mean zero, then the solution of $\eqref{KdV}$ has spatial mean zero. It has been proven in \cite{Gh2} that $\eqref{KdV}$, subject to the boundary conditions $\eqref{boundary}$, has a strong global attractor in $H^{2}$. The global attractor is the maximal compact invariant set under the solution operator $S(t, \cdot)$. Throughout this paper, for simplicity, we omit the domain of integration and $dx$ in the spatial integration so that
	$$|u|^{2}:=\int u^{2}, \qquad |u|_{H^1}^2:=\int u_{x}^2,$$
	$$|u|_{H^2}^2:=\int u_{xx}^2, \qquad |u|_{\infty}:=\sup_{x\in[0,L]}|u(x)|.$$
We also recall the Agmon inequality
	\begin{align}
		\sup_{x\in[0,L]}|w(x)|\leq |w|^{\frac{1}{2}} |w_{x}|^{\frac{1}{2}} \label{Agmon}.
	\end{align}	
Bounding expressions that depend on $\gamma$, $f$ (and $\mu$, see $\eqref{KdVw}$, below) will be denoted by capital letters $\mathcal{R}$ with specific indices. The bounding expressions $\mathcal{R}$ with indices $0$, $1$, $\infty$, $2$ and superscript $'$ are the $L^2$, $H^1$, $L^{\infty}$, $H^2$ and time derivative bounds, respectively,  for the solution in the global attractor of $\eqref{KdVw}$, below. Those bounding expressions accented with $\tilde{}$ and $\tilde{\tilde{}}$ will be subsequently improved. As they are improved once, we remove a $\tilde{}$. For example, $\tilde{\tilde{\mathcal{R}}}_1$ will be improved once, and we use $\tilde{\mathcal{R}}_1$ for the improvement. Then we improve $\tilde{\mathcal{R}}_1$ again to get $\mathcal{R}_1$ which is the final improvement. We will denote $\mathcal{R}_{i}^{0}=\mathcal{R}|_{\mu=0}$ for $i=0, 1, \infty, 2$, and $\mathcal{R'}^{0}=\mathcal{R'}|_{\mu=0}$. Universal constants will be denoted by $c$, and updated throughout the paper. We denote by $P=P_m$ the $L^2$-projection onto the space $H_m$, where
\begin{align}
H_m:= \hbox{span}\{e^{ikx\frac{2\pi}{L}}: 0<|k|\leq m\}. \label{hm}
\end{align}
\section{The Statement Of The Main Result}

We use the Banach spaces,
	\begin{align}
		X=\dot{C}_{b}(\mathbb{R}, P_mH^2)= \{ v:\mathbb{R} \rightarrow  P_mH^2 : v \text{ is continuous and bounded, }\int v=0 \}, \label{X}
	\end{align}
	\begin{align}
		Y=\dot{C}_{b}(\mathbb{R}, H^2)= &\{w: \mathbb{R}\rightarrow H^2 :  w \text{ is continuous, } |w|_{H^{2}} \text{ is bounded, and } \int w=0 \},\label{Y}
	\end{align}
with the following norms,
	\begin{align*}
		|v|_X= \sup_{s\in \mathbb{R}} |v(s)|_{H^{2}}, \qquad |w|_Y= \sup_{s\in \mathbb{R}} |w(s)|_{H^2}.
	\end{align*}	
Let $v\in X$, and consider the equation
	\begin{equation}
		w_s+ww_{x}+w_{xxx}+\gamma w=f- \mu [P_m(w)-v],  \label{KdVw}
	\end{equation}
subject to periodic boundary conditions
	\begin{align}
		\partial_{x}^{j}w(s, x)=\partial_{x}^{j}w(s, x+L), \qquad \forall (s, x)\in \mathbb{R}\times \label{boundaryw}		\mathbb{R}. 	
	\end{align}
for $j=0,1$ and $2$. We assume that $f\in \dot{H}_{\text{per}}^2$. The following theorem is the combination of several theorems in the subsequent sections. The conditions $\eqref{condition1}$, $\eqref{condition2}$, $\eqref{condition3}$,  $\eqref{condition4}$, $\eqref{condition3'}$, $\eqref{condition5}$, $\eqref{condition6}$, and $\eqref{condition4'}$ are defined in subsequent sections.

\newtheorem*{ANA4}{Theorem} \label{MT}
	\begin{ANA4}
		Let $\rho=4R$, where $R$ is given in $\eqref{Rkdv1}$. Let $v\in \mathcal{B}_{X}^{\rho}(0)$, i.e., $v\in X$, with $|v|_{X}<\rho$, and $u^*$ be a steady state solution of equation $\eqref{KdV}$. Then we have the following:
		\begin{enumerate}
			\item Assume that $\mu$ and $m$ are large enough depending on $\rho$, such that $\eqref{condition1}$, $\eqref{condition2}$, $\eqref{condition3}$ and $\eqref{condition4}$ hold, then
				\begin{enumerate}
					\item there exists a unique bounded solution $w\in Y$ of $\eqref{KdVw}$, which holds in the space $H^{-1}$. This defines a map $W: \mathcal{B}_{X}^{\rho}(0)\to Y$, such that $w=W(v)$.
					\item The map $P_mW: \mathcal{B}_{X}^{\rho}(0) \to X$ is a Lipschitz map.
					\item Assume also that $\mu$ and $m$ are large enough such that $\eqref{condition3'}$ holds, then we have $W(P_mu(s))=u(s)$, if and only if $u(\cdot)$ is a trajectory in the global attractor of $\eqref{KdV}$.
					\item The determining form
						\begin{align*}
							\frac{dv}{dt}= F(v)= -|v-P_mW(v)|_{X}^2 (v- P_mu^*),
						\end{align*}
					is an ordinary differential equation which has global existence and uniqueness in the forward invariant set
							$$\{v\in X: |v-P_mu^*|_X< 3R\}.$$
					Furthermore, the trajectory $P_mu(s)$, $s\in \mathbb{R}$, is included in this set, for every $u(\cdot)\subset \mathcal{A}$.
				\end{enumerate}	
			\item Assume that $\mu$ and $m$ are large enough such that $\eqref{condition1}$, $\eqref{condition2}$, $\eqref{condition3'}$, $\eqref{condition5}$ and $\eqref{condition6}$ hold. Let $u(s)$ be the global solution of $\eqref{KdV}-\eqref{boundary}$, for a given initial data $u(s_{0})\in \dot{H}^{2}$, which satisfies  $|u(s)|_{H^{2}}<\rho$, for all $s\geq s_{0}$; and let $w(s)$ be the global solution of  $\eqref{KdVw}-\eqref{boundaryw}$, with $v=P_{m}(u)$, for arbitrary initial data $w(s_{0})\in \dot{H}^{2}$,  where  $|w(s_{0})|_{H^{2}}<\rho$.  Then $|w(s)-u(s)|\rightarrow 0$, as $s\rightarrow \infty$, at an exponential rate with exponent $\frac{\gamma}{4}$.
			\item Assume that $m$ is large enough such that $\eqref{condition4'}$ holds. Then the determining modes property holds for $\eqref{KdV}$, $\eqref{boundary}$, and the number of determining modes, $m$, is of the order of $O(\gamma^{-\frac{26}{3}}, |f|_{H^{2}}^{\frac{14}{3}})$, as $\gamma \rightarrow 0$ and $|f|_{H^{2}}\rightarrow \infty$.
		\end{enumerate}
	\end{ANA4}
We note that the above theorem assumes the existence of a steady state solution to $\eqref{KdV}$, which is proved in Appendix A. For the existence of solution to $\eqref{KdVw}$ which belongs to $Y$, we use parabolic regularization (see, e.g., \cite{Temam1}). Specifically, we consider 
	\begin{equation}
		w_s+\epsilon w_{xxxx}+ww_{x}+w_{xxx}+\gamma w=f- \mu [P_m(w)-v],  \label{KdVwReg}
	\end{equation}
where $0<\epsilon<1$, subject to periodic boundary conditions
	\begin{align*}
		\partial_{x}^{j}w(s, x)=\partial_{x}^{j}w(s, x+L), \qquad \forall (s, x)\in \mathbb{R}\times 			\mathbb{R}. 	
	\end{align*}
for $j=0, 1, 2$. We give the proof using the Galerkin approximation method. Our plan is to find a sufficiently smooth bounded solution $w^{\epsilon}$ to $\eqref{KdVwReg}$ for all $s\in \mathbb{R}$. In order to do that, we consider the Galerkin approximation of $\eqref{KdVwReg}$:
	\begin{equation}
		\frac{\partial w_{n}}{\partial s}+\epsilon (w_{n})_{xxxx}+P_{n}(w_{n}(w_{n})_{x})+(w_{n})_{xxx}+\gamma w_{n}=f_{n}- \mu [P_m(w_{n})-v],  \label{KdVwRegGal}
	\end{equation}
with initial data
	\begin{align*}
		w_n(-k,x)=0, \qquad \text{for some fixed } k\in \mathbb{N},
	\end{align*}
where $w_{n}, f_{n}\in H_n$. Here we consider $n>m$. Note that $P_nv=v$, for every $v\in X$. Since $\eqref{KdVwRegGal}$ is an ordinary differential equation with locally Lipschitz nonlinearity, it has a unique, bounded solution $w^{n}$ on a small interval $[-k, S^*)$, for some $S^*>-k$. We establish here global existence (in time), uniform in $n$ and $s$ bounds (which may depend on $\frac{1}{\epsilon}$) in the norms of the spaces $\dot{L}^2$ and $\dot{H}^4$, which will imply, among other things, that $S^*=\infty$. This is done by taking the inner product of $\eqref{KdVwRegGal}$ with $w^{n}$ and $\partial^{8}_{x}w^{n}$ which are in the space $H_{n}$, respectively, and then using integration by parts, H\"older, Agmon, interpolation and Gronwall inequalities. We denote this solution by $w^{\epsilon}_{n,k}\in C_{b}((-k, \infty); \dot{H}^{4})\cap L^{2}((-k, \infty); \dot{H}^{6})$, referring to its dependence on $\epsilon, n$ and the initial time $-k$. By using the Arzela-Ascoli theorem and Cantor diagonal process on the sequence in $k$, we obtain a solution $w^{\epsilon}_{n}\in C_{b}((-\infty, \infty); \dot{H}^{4})\cap L^{2}((-\infty, \infty); \dot{H}^{6})$ that is defined for all $s\in\mathbb{R}$, and enjoys certain estimates, which are uniform with respect to n. Then, by applying the Aubin compactness theorem (see e.g. \cite{ConFoias}, \cite{TemamDyn}) and Cantor diagonal process to the sequence in $n$, we obtain a bounded solution $w^{\epsilon}\in L^{\infty}((-\infty, \infty); \dot{H}^{4})\cap L^{2}((-\infty, \infty); \dot{H}^{6})$ to $\eqref{KdVwReg}$. Moreover, we show that the equation $\eqref{KdVwReg}$ holds in $L^2(\mathbb{R}, \dot{L}^{2})$. Then, using the Hamiltonian structure of equation $\eqref{KdVwReg}$ when $\epsilon=0$, $f=0$ and $\mu=0$, we find uniform in $\epsilon$ estimates for $w^{\epsilon}$ in $H^{1}$ and $H^{2}$. Finally, we take $\epsilon \rightarrow 0$, and obtain a bounded solution $w\in H^{2}$ on $\mathbb{R}$ to the equation $\eqref{KdVw}$, which obeys the bounds we find in previous steps. In the next section, we establish the above described a priori estimates for $\eqref{KdVwRegGal}$ and steps.

It is worth mentioning that following similar ideas to those in section 4, below, for the case $\mu=0$ in equation $\eqref{KdVwReg}$, and assuming that $w$ is a solution that lies in the global attractor $\mathcal{A}$, one can obtain an explicit bound for $R$ in $\eqref{Rkdv1}$.

\section{A Priori Estimates}

\subsection{$L^2$ bound}
Let $[-k, S^*)$ be the maximal interval of existence for $\eqref{KdVwRegGal}$. For simplicity we will drop the subscripts $n$ and $m$; we will use $w$ instead of $w_{n}$ and $Pw$ instead of $P_{m}w$. Multiply $\eqref{KdVwRegGal}$ by $w$, and integrate over the spatial domain to get
	\begin{align*}
		\int ww_{s}+\epsilon \int w_{xxxx}w+ \int w^{2}w_{x}+\int w_{xxx}w&+\gamma\int w^{2}= \\
													           &\int fw-\mu\int (Pw)^{2}+\mu\int vPw.
	\end{align*}
Here we used the fact that $v$ is in $H_{m}$. We recognize that for any natural number $l$,
	\begin{align}
		\int w^{l}w_{x}=\int \frac{1}{l+1} \partial_{x}(w^{l+1})=0 \label{wlwx}
	\end{align}
Also,
	\begin{align}
		\int w_{xxx}w=-\int w_{xx}w_{x}=-\int \frac{1}{2}\partial_{x}(w_{x})^{2}=0. \label{hoca1}
	\end{align}
Thus, by the Cauchy-Schwarz and Young inequalities, we have
	\begin{align}
		\frac{d}{ds}|w|^{2}+2\epsilon |w_{xx}|^{2}+2\gamma |w|^{2}+2\mu |Pw|^{2}&\leq 2|f||w|+2\mu |v|_{X}|Pw| \label{lazim}\\
										               &\leq \frac{|f|^{2}}{\gamma}+\gamma|w|^{2}+\mu|v|_{X}^{2}+\mu|Pw|^{2},\notag
	\end{align}
or simply
	\begin{align*}
		\frac{d}{ds}|w|^{2}+\gamma |w|^{2}+\mu |Pw|^{2}\leq \frac{|f|^{2}}{\gamma}+\mu|v|_{X}^{2}.
	\end{align*}
Since $w(-k)=0$, and since $|v|_{X}<\rho$, by applying Gronwall's inequality,  we get
	\begin{align*}
		|w(s)|\leq \frac{|f|+\gamma^{\frac{1}{2}}\mu^{\frac{1}{2}}\rho}{\gamma}=: \tilde{\mathcal{R}}_{0}, \text{ for all } s\in [-k, S^{*}).
	\end{align*}
This bound implies the global existence of the Galerkin system $\eqref{KdVwRegGal}$ so that $S^{*}=\infty$. Moreover, we have $w\in C_{b}([-k, \infty); \dot{L}^{2})$.

\subsection{$H^{4}$ bound}
Multiply $\eqref{KdVwRegGal}$ by $\partial_{x}^{8}w\in H_{n}$, integrate over the spatial domain and use integration by parts, the Cauchy-Schwarz and Young inequalities, to obtain
	\begin{align}
		\frac{d}{ds}|\partial_{x}^{4}w|^{2}+2\epsilon |\partial_{x}^{6}w|^{2}&+2\gamma |\partial_{x}^{4}w|^{2}+2\mu |\partial_{x}^{4}Pw|^{2}\leq 2|f_{xx}||\partial_{x}^{6}w|\notag\\
		&+2\mu |v|_{X}|\partial_{x}^{6}Pw|-\int ww_{x}\partial_{x}^{8}w\notag \\
										               &\leq \frac{4}{\epsilon}(|f_{xx}|^{2}+\mu^{2}|v|_{X}^{2})+\frac{\epsilon}{2}|\partial_{x}^{6}w|^{2}-\int ww_{x}\partial_{x}^{8}w.\label{bu}
	\end{align}
Here
	\begin{align*}
		\int ww_{x}\partial^{8}_{x}w&= -\int w_{x}^{2}\partial^{7}_{x}w-\int ww_{xx}\partial^{7}_{x}w\\
						  &= 3\int w_{x}w_{xx}\partial^{6}_{x}w+\int ww_{xxx}\partial^{6}_{x}w\\
						  &\leq 3|w_{x}|_{\infty}|w_{xx}||\partial^{6}_{x}w|+|w|_{\infty}|w_{xxx}||\partial^{6}_{x}w| \\
						  &\leq 3|w_{x}|^{\frac{1}{2}}|w_{xx}|^{\frac{3}{2}}|\partial^{6}_{x}w|+ |w|^{\frac{1}{2}}|w_{x}|^{\frac{1}{2}}|w_{xxx}||\partial^{6}_{x}w|\tag{Agmon's inequality}\\
						  &\leq c|w|^{\frac{17}{12}}|\partial^{6}_{x}w|^{\frac{19}{12}}\tag{Interpolation of $H^{1}, H^{2}, H^{3}$ between $L^{2}$ \& $H^{6}$}\\
						  &\leq c\tilde{R}_{0}^{\frac{17}{12}}|\partial^{6}_{x}w|^{\frac{19}{12}}\\		
						  &\leq \frac{c}{\epsilon^{\frac{19}{5}}}+ \frac{\epsilon}{2} |\partial^{6}_{x}w|^2. \tag{Young's inequality}
	\end{align*}
We use this in $\eqref{bu}$, and absorb the term $ \frac{\epsilon}{2} |\partial^{6}_{x}w|^2$ in the left-hand side, and find an $H^{4}$ bound for $w=w^{\epsilon}_{n,k}$. Thus,
	\begin{align} \label{eqn1}
		w^{\epsilon}_{n,k}\in C_{b}((-k, \infty); \dot{H}^{4})\cap L^{2}_{loc}((-k, \infty); \dot{H}^{6}).
	\end{align}
We realize that since
	\begin{equation*}
		\frac{\partial w^{\epsilon}_{n,k}}{\partial s}=-\epsilon (w^{\epsilon}_{n,k})_{xxxx}-P_{n}(w^{\epsilon}_{n,k}(w^{\epsilon}_{n,k})_{x})-(w^{\epsilon}_{n,k})_{xxx}-\gamma w^{\epsilon}_{n,k}+f_{n}- \mu [P_m(w^{\epsilon}_{n,k})-v],
	\end{equation*}
we have 	(see section 4.7 of \cite{JST})
	\begin{align} \label{eqn2}
		\frac{\partial w^{\epsilon}_{n,k}}{\partial s}\in C_{b}((-k, \infty); \dot{L}^{2})\cap L^{2}_{loc}((-k, \infty); \dot{H}^{2}).	
	\end{align}
We observe that the bounds of the relevant norms in $\eqref{eqn1}$, $\eqref{eqn2}$ are independent of $n$ and $k$. We now focus on the sequence $(w^{\epsilon}_{n,k})_{k\in \mathbb{N}}\subset H_{n}$ for fixed $\epsilon$, and fixed $n$, but $k$ is variable. Since $H_{n}$ is a finite dimensional space, thanks to \eqref{eqn2} we may invoke the Arzela-Ascoli theorem to extract a subsequence $((w^{\epsilon}_{n,k})^{(1)})$ of $(w^{\epsilon}_{n,k})$ such that $(w^{\epsilon}_{n,k})^{(1)}\rightarrow (w^{\epsilon}_{n})^{(1)}$, as $k\rightarrow  \infty$, uniformly on the interval $[-1,1]$. Moreover,   $(w^{\epsilon}_{n})^{(1)}$ is  a bounded solution of $\eqref{KdVwRegGal}$ on the interval $[-1,1]$. Let $j\in \mathbb{N}$, we use an induction iterative procedure to define $(w^{\epsilon}_{n,k})^{(j+1)}$ to be subsequence of $(w^{\epsilon}_{n,k})^{(j)}$, all of which are subsequences of $w^{\epsilon}_{n,k}$. Thanks to \eqref{eqn2} we can apply the Arzela-Ascoli theorem to extract a subsequence of $(w^{\epsilon}_{n,k})^{(j)}$, denoted by $(w^{\epsilon}_{n,k})^{(j+1)}$, such that $(w^{\epsilon}_{n,k})^{(j+1)}\rightarrow (w^{\epsilon}_{n})^{(j+1)}$, as $k\rightarrow \infty$, uniformly on the interval $[-(j+1), (j+1)]$. We notice that $(w^{\epsilon}_{n})^{(j)}$ satisfies all the uniform estimates (see \eqref{eqn1} and \eqref{eqn2}) that are satisfied above by the sequence $w^{\epsilon}_{n,k}$ in the interval $[-j, j]$. Then by the Cantor diagonal process, we obtain that $(w^{\epsilon}_{n,k})^{(k)}\rightarrow w^{\epsilon}_{n}$ on every interval $[-M, M]$, where $w^{\epsilon}_{n}$ is a bounded solution of the Galerkin approximation of the regularized equation $\eqref{KdVwRegGal}$ on all of $\mathbb{R}$ such that
	$$w^{\epsilon}_{n}\in C_{b}((-\infty, \infty); \dot{H}^{4})\cap L^{2}_{loc}((-\infty, \infty); \dot{H}^{6}),$$
and obeys the estimates we found in previous steps. Since
	\begin{equation*}
		\frac{\partial w^{\epsilon}_{n}}{\partial s}=-\epsilon (w^{\epsilon}_{n})_{xxxx}-P_{n}(w^{\epsilon}_{n}(w^{\epsilon}_{n})_{x})-(w^{\epsilon}_{n})_{xxx}-\gamma w^{\epsilon}_{n}+f_{n}- \mu [P_m(w^{\epsilon})-v],
	\end{equation*}
we have
	$$\frac{\partial w^{\epsilon}_{n}}{\partial s}\in C_{b}((-\infty, \infty); \dot{L}^{2})\cap L^{2}_{loc}((-\infty, \infty); \dot{H}^{2}).$$
Now, by using Aubin's compactness theorem and Cantor's diagonal process, one can find
	$$w^{\epsilon}\in L^{\infty}((-\infty, \infty); \dot{H}^{4})\cap L^{2}_{loc}((-\infty, \infty); \dot{H}^{6}),$$
that solves $\eqref{KdVwReg}$, and the equation holds in $L^{\infty}(\mathbb{R}; \dot{L}^{2})\cap L^{2}_{loc}(\mathbb{R}; \dot{H}^{2})$. 

\subsection{$H^1$ bound (uniform in $\epsilon$)}
Next, we establish uniform in $\epsilon$ bounds for $w^{\epsilon}$. For simplicity, we will write $w$ instead of $w^{\epsilon}$. Since $w\in H^{4}$ and $H^{4}$ is an algebra, we have $w^{2}\in H^{4}\subset L^{2}$. Thus, we can take the inner product of equation $\eqref{KdVwReg}$, which holds in $L^{\infty}(\mathbb{R}, L^{2})$, with $2w_{xx}+w^{2}$. Integrating over the spatial domain, using integration by parts, and observing that the function $\Phi$, defined below, is absolutely continuous, we have
	\begin{align}
		\frac{d}{ds}\Phi+\gamma \Phi+2\epsilon|w_{xxx}|^{2}= &-\gamma \int w_{x}^{2}-2\mu\int (Pw_{x})^{2}+\frac{2\gamma}{3}\int w^{3}\notag \\
												  &-2\int f_{xx}w-2\mu\int v_{xx}Pw-\int fw^{2}\notag \\
												  &+\mu\int w^{2}Pw-\mu \int vw^{2}+\epsilon \int w_{xxxx}w^{2}, \label{F1equation*}
	\end{align}
where
	\begin{align}
		\Phi(w)=\int (w_{x}^{2}-\frac{w^{3}}{3}). \label{func1}
	\end{align}
We now estimate the right-hand side of $\eqref{F1equation*}$. Here
	\begin{align*}
		\frac{2\gamma}{3}\int w^{3}+\mu\int w^{2}Pw&\leq (\gamma+\mu)|w|_{\infty}|w|^{2}\\
									              & \leq (\gamma+\mu)|w|^{\frac{5}{2}}|w_{x}|^{\frac{1}{2}}\\
									              & \leq \frac{(\gamma+\mu)^{\frac{4}{3}}}{\gamma^{\frac{1}{3}}}|w|^{\frac{10}{3}}+ \frac{\gamma}{2}|w_{x}|^{2}\\
									              &\leq\frac{(\gamma+\mu)^{\frac{4}{3}}}{\gamma^{\frac{1}{3}}}\tilde{\mathcal{R}}_{0}^{\frac{10}{3}}+ \frac{\gamma}{2}|w_{x}|^{2}.
	\end{align*}
Also,
	\begin{align*}
		&-2\int f_{xx}w\leq2|f|_{H^{2}}|w|\leq 2|f|_{H^{2}}\tilde{\mathcal{R}}_{0} \\
		&-\int fw^{2}-\mu \int vw^{2}\leq (|f|_{\infty}+\mu|v|_{X})|w|^{2}\leq  (|f|_{\infty}+\mu|v|_{X})\tilde{\mathcal{R}}_{0}^{2},\\
		&-2\mu\int v_{xx}Pw\leq 2\mu |v_{xx}||Pw|\leq 2\mu|v|_{X}\tilde{\mathcal{R}}_{0}.
	\end{align*}
For the last term on the right hands side of $\eqref{F1equation*}$, we do the following:
	\begin{align*}
		\epsilon \int w_{xxxx}w^{2}&= -2\epsilon \int ww_{x}w_{xxx}\\
						        &\leq 2\epsilon |w|_{\infty}|w_{x}||w_{xxx}|\\
						        &\leq 2\epsilon |w|^{\frac{1}{2}}|w_{x}|^{\frac{3}{2}}|w_{xxx}|\tag{Agmon}\\
						        &\leq \epsilon c|w|^{\frac{3}{2}}|w_{xxx}|^{\frac{3}{2}} \tag{Interpolation of $H^{1}$ between $L^{2}$ and $H^{3}$}\\
						        &\leq \epsilon c\tilde{\mathcal{R}}_{0}^{\frac{3}{2}}|w_{xxx}|^{\frac{3}{2}}\\
						        &\leq  c \epsilon \tilde {\mathcal{R}}_{0}^{6}+ \frac{3\epsilon}{4}|w_{xxx}|^{2}. \tag{Young}
	\end{align*} 	
Thus, we obtain
\begin{align*}
		\frac{d}{ds}\Phi+\gamma \Phi\leq &\frac{(\gamma+\mu)^{\frac{4}{3}}}{\gamma^{\frac{1}{3}}}\tilde{\mathcal{R}}_{0}^{\frac{10}{3}}+ (|f|_{\infty}+\mu|v|_{X})\tilde{\mathcal{R}}_{0}^{2}+ 2(|f|_{H^{2}}+\mu|v|_{X})\tilde{\mathcal{R}}_{0}+ c \epsilon \tilde {\mathcal{R}}_{0}^{6}.
	\end{align*}
We realize that the right-hand side of the above inequality does not depend on $\epsilon$ (since $\epsilon \in (0, 1)$). Let $s\in \mathbb{R}$, and take $s_{0}<s$. Then by Gronwall's inequality, we obtain
	\begin{align*}
		&\Phi(w(s)) \leq e^{-\gamma(s-s_{0})}\Phi(w(s_{0}))\notag \\
		                 &+\frac{1}{\gamma} \left \{ \frac{(\gamma+\mu)^{\frac{4}{3}}}{\gamma^{\frac{1}{3}}}\tilde{\mathcal{R}}_{0}^{\frac{10}{3}}+ (|f|_{\infty}+\mu|v|_{X})\tilde{\mathcal{R}}_{0}^{2}+ (2|f|_{H^{2}}+2\mu|v|_{X})\tilde{\mathcal{R}}_{0}+c \epsilon \tilde {\mathcal{R}}_{0}^{6}\right \}.
	\end{align*}
Notice that for $\epsilon>0$, we have $w\in L^{\infty}(\mathbb{R}, H^{4})$, in particular $\Phi(w(s_{0}))$ is bounded uniformly, for every $s_{0}\in \mathbb{R}$. Now we let $s_{0}\rightarrow -\infty$. Since $e^{-\gamma(s-s_{0})}\Phi(w(s_{0}))\rightarrow 0$, and since $|v|_{X}<\rho$, we have
	\begin{align}
		\Phi(w(s)) \leq \frac{1}{\gamma} \left \{ \frac{(\gamma+\mu)^{\frac{4}{3}}}{\gamma^{\frac{1}{3}}}\tilde{\mathcal{R}}_{0}^{\frac{10}{3}}+ (|f|_{\infty}+\mu\rho)\tilde{\mathcal{R}}_{0}^{2}+ (2|f|_{H^{2}}+2\mu\rho)\tilde{\mathcal{R}}_{0}+c \epsilon \tilde {\mathcal{R}}_{0}^{6}\right \}, \label{ineq1*}
	\end{align}
for all $s\in \mathbb{R}$. Also,
	\begin{align*}
		\Phi(w)=\int w_{x}^{2}-\frac{w^{3}}{3}&\geq |w_{x}|^{2}-\frac{1}{3}|w|^{\frac{5}{2}}|w_{x}|^{\frac{1}{2}}\\
									&\geq |w_{x}|^{2}-|w|^{\frac{10}{3}}-\frac{1}{2}|w_{x}|^{2},
	\end{align*}
and hence
	\begin{align}
		|w_{x}|^{2}&\leq 2\Phi(w)+2|w|^{\frac{10}{3}}\leq 2\Phi(w)+2\tilde{\mathcal{R}}_{0}^{\frac{10}{3}}. \label{hoca2}
	\end{align}
Using $\eqref{ineq1*}$, we obtain that
		\begin{align*}
		|w_{x}| \leq \tilde{\tilde{\mathcal{R}}}_{1},
	\end{align*} 	
where
	\begin{align}
		\tilde{\tilde{\mathcal{R}}}_{1}^{2}:=&\frac{2\left((\gamma+\mu)^{\frac{4}{3}}+\gamma^{\frac{4}{3}}\right)}{\gamma^{\frac{4}{3}}}\tilde{\mathcal{R}}_{0}^{\frac{10}{3}}\notag \\
		&+ \frac{2}{\gamma}\left \{(|f|_{\infty}+\mu\rho)\tilde{\mathcal{R}}_{0}^{2}+ 2(|f|_{H^{2}}+\mu\rho)\tilde{\mathcal{R}}_{0}+c \epsilon \tilde {\mathcal{R}}_{0}^{6}\right \}. \label{wxtt}
	\end{align}
Thus, we have a $H^{1}$ bound which is uniform in $\epsilon$, since $\epsilon\in (0, 1)$. Note that $\tilde{\tilde{\mathcal{R}}}_{1}=O(\mu^{\frac{3}{2}})$ as $\mu \rightarrow \infty$.

\subsection{Improved $L^{2}$ bound}
We will use $\tilde{\mathcal{R}}_{1}$ to get a sharper bound for $|w|$. We define the operator $Q:= I-P$, i.e., $Qw=w-Pw$. We add $2\mu|Qw|^{2}$ to both sides of $\eqref{lazim}$, use Young's and Poincar\'e's  inequalities to get
	\begin{align*}
		\frac{d}{ds}|w|^{2}+2\epsilon |w_{xx}|^{2}+2\gamma |w|^{2}+2\mu |w|^{2}&\leq 2|f||w|+2\mu |v|_{X}|Pw|+2\mu |Qw|^{2}\\		
											   &\leq \frac{|f|^{2}}{\gamma}+\gamma|w|^{2}+\mu|v|_{X}^{2}+\mu|Pw|^{2}+ \frac{\mu L^{2}}{2\pi^{2} (m+1)^{2}}|w_{x}|^{2}\\
											   &\leq \frac{|f|^{2}}{\gamma}+\mu|v|_{X}^{2}+ (\gamma+\mu)|w|^{2}+ \frac{\mu L^{2}}{2\pi^{2} (m+1)^{2}}\tilde{\tilde{\mathcal{R}}}_{1}^{2}.						              
	\end{align*}
Now, if we choose $m$ large enough such that
	\begin{align}
		\frac{\mu L^{2}}{2\pi^{2} (m+1)^{2}}\tilde{\tilde{\mathcal{R}}}_{1}^{2}\leq \mu^{\alpha}, \label{condition1}
	\end{align}
for some $\alpha\geq 1$, to be determined later, since $|v|_{X}<\rho$, then
	\begin{align*}
		\frac{d}{ds}|w|^{2}+(\gamma+\mu) |w|^{2}&\leq \frac{|f|^{2}}{\gamma}+\mu(\rho^{2}+\mu^{\alpha-1}). 				              
	\end{align*}
By Gronwall's inequality on the interval $[s_0,s]$, and letting $s_0 \to - \infty$, we obtain
	\begin{align}
		|w(s)|\leq \sqrt{\frac{\frac{|f|^{2}}{\gamma}+\mu(\rho^{2}+\mu^{\alpha-1})}{(\gamma+\mu)}}&\leq \sqrt{\frac{|f|^{2}}{\gamma^{2}}+\rho^{2}+\mu^{\alpha-1}}\notag\\
																			 &\leq \frac{|f|}{\gamma}+\rho+\mu^{\frac{\alpha-1}{2}} =: \mathcal{R}_{0}. \label{w}			
	\end{align}
for all $s\in \mathbb{R}$. We observe that $\mathcal{R}_{0}= O(\mu^{\frac{\alpha-1}{2}}, \gamma^{-1}, |f|)$, for $\alpha\in [1, 2)$, as $\mu \rightarrow \infty$, $\gamma \rightarrow 0$ and $|f| \rightarrow \infty$. We chose $\alpha$ less than $2$, so that this is an improvement over $\tilde{\mathcal{R}}_{0}=O(\mu^{\frac{1}{2}})$.

\subsection{Improved $H^{1}$ bound (uniform in $\epsilon$)}
Replacing $\tilde{\mathcal{R}}_{0}$ with $\mathcal{R}_{0}$ in $\eqref{wxtt}$, we find an improved bound
	\begin{align*}
		\sup_{s\in \mathbb{R}}|w_{x}(s)| \leq \tilde{\mathcal{R}_{1}},
	\end{align*}
where
	\begin{align*}
		\tilde{\mathcal{R}}_{1}^{2}:= &\frac{2}{\gamma}\left \{ (\frac{(\gamma+\mu)^{\frac{4}{3}}}{\gamma^{\frac{1}{3}}}+\gamma)\mathcal{R}_{0}^{\frac{10}{3}}+ (|f|_{\infty}+\mu\rho)\mathcal{R}_{0}^{2}+ 2(|f|_{H^{2}}+\mu\rho)\mathcal{R}_{0}+c \epsilon \tilde {\mathcal{R}}_{0}^{6}\right \}.
	\end{align*}
We note that $\tilde{\mathcal{R}}_{1}=O(\mu^{\frac{5\alpha-1}{6}})$, as $\mu \rightarrow \infty$, where $\alpha\in [1, 2)$. 	

\subsection{$H^2$ bound (uniform in $\epsilon$)}
Since $\eqref{KdVwReg}$ holds in $L^{\infty}(\mathbb{R}, \dot{L}^{2})$ and $w\in \dot{H}^{4}$, we multiply $\eqref{KdVwReg}$ by $\frac{18}{5}w_{xxxx}+6ww_{xx}+3w_{x}^{2}+w^{3}\in L^{2}$, and integrate over the spatial domain. From multiplying $\eqref{KdVwReg}$ by the term $\frac{18}{5}w_{xxxx}$, we get
	\begin{align}
		\frac{d}{ds}\int \frac{9}{5}w_{xx}^{2} &+\frac{18\epsilon}{5}|w_{xxxx}|^{2}+ \frac{18}{5}\int ww_{x}w_{xxxx}\notag \\
		&+ \frac{18}{5}\int w_{xxx}w_{xxxx}+\gamma \frac{18}{5}\int w_{xx}^{2}+\mu \frac{18}{5}\int Pw_{xx}^{2}\notag \\
															      &=  \frac{18}{5}\int f_{xx}w_{xx}+\mu  \frac{18}{5}\int v_{xx}w_{xx}. \label{wxx1}
	\end{align}
We have
	\begin{align*}
		 \frac{18}{5}\int ww_{x}w_{xxxx}= - \frac{18}{5}\int w_{x}^{2}w_{xxx}-  \frac{18}{5}\int ww_{xx}w_{xxx},
	\end{align*}
where
	\begin{align*}
		 - \frac{18}{5}\int w_{x}^{2}w_{xxx}= \frac{36}{5}\int w_{x}w_{xx}^{2}, 
	\end{align*}
and
	\begin{align*}
		 -\frac{18}{5}\int ww_{xx}w_{xxx}= \frac{18}{5}\int w_{x}w_{xx}^{2}+\frac{18}{5}\int ww_{xxx}w_{xx},
	\end{align*}
which gives
	\begin{align*}
		 -\frac{18}{5}\int ww_{xx}w_{xxx}= \frac{9}{5}\int w_{x}w_{xx}^{2}.
	\end{align*}
We rewrite $\eqref{wxx1}$ as,
	\begin{align}
		\frac{d}{ds}\int \frac{9}{5}w_{xx}^{2}& +\frac{18\epsilon}{5}|w_{xxxx}|^{2}+9\int w_{x}w_{xx}^{2}+\gamma \frac{18}{5}\int w_{xx}^{2}+\mu \frac{18}{5}\int Pw_{xx}^{2}\notag\\
															      &=  \frac{18}{5}\int f_{xx}w_{xx}+\mu  \frac{18}{5}\int v_{xx}w_{xx}. \label{wxx2}
	\end{align}
From multiplying $\eqref{KdVwReg}$ by the term $6ww_{xx}$, we get
	\begin{align}
		6\int w_{s}ww_{xx}&+6\epsilon \int ww_{xx}w_{xxxx}+6\int w^{2}w_{x}w_{xx}\notag \\
		&+6\int ww_{xx}w_{xxx}+6\gamma \int w^{2}w_{xx}+6\mu \int (Pw)ww_{xx}\notag\\
										   &= 6\int fww_{xx}+6\mu \int vww_{xx} \label{wxx3}.
	\end{align}
Since
	\begin{align*}
		6\int w_{s}ww_{xx}=-6\int w_{sx}ww_{x}-6\int w_{s}w_{x}^{2},
	\end{align*}
and
	\begin{align*}
		\frac{d}{ds}\int ww_{x}^{2}= \int w_{s}w_{x}^{2}+2\int ww_{x}w_{xs},
	\end{align*}
we have that
	\begin{align}
		6\int w_{s}ww_{xx}=-\frac{d}{ds}\int 3ww_{x}^{2}-3\int w_{s}w_{x}^{2}.  \label{wxx5}
	\end{align}
For the remaining terms in $\eqref{wxx3}$, we write
	\begin{align*}
		&6\int w^{2}w_{x}w_{xx}= 3\int w^{2}\frac{\partial}{\partial x}(w_{x}^{2})= -3\int \frac{\partial}{\partial x}(w^{2})w_{x}^{2}= -6\int ww_{x}^{3}, \\
		&6\int ww_{xx}w_{xxx}= 3\int w\frac{\partial}{\partial x}(w_{xx}^{2})= -3\int w_{x}w_{xx}^{2}, \\
		&6\gamma \int w^{2}w_{xx}= -12\gamma \int ww_{x}^{2}, \\
		&6\mu\int (Pw)ww_{xx}=-6\mu\int (Pw_{x})ww_{x}-6\mu\int(Pw)w_{x}^{2}.
	\end{align*}
By using these equalities and $\eqref{wxx5}$ in $\eqref{wxx3}$, we get
	\begin{align}
		-\frac{d}{ds}\int 3ww_{x}^{2}&-3\int w_{s}w_{x}^{2} -6\int ww_{x}^{3}-3\int w_{x}w_{xx}^{2}-12\gamma \int ww_{x}^{2} \notag\\
									                       &-6\mu\int (Pw_{x})ww_{x}-6\mu\int(Pw)w_{x}^{2}\notag\\
										   	    &= 6\int fww_{xx}+6\mu \int vww_{xx}-6\epsilon \int ww_{xx}w_{xxxx} \label{wxx6}.
	\end{align}
Multiplying $\eqref{KdVwReg}$ by the term $3w_{x}^{2}$, we get
	\begin{align}
		3\int w_{s}w_{x}^{2}+3\int ww_{x}^{3}-&6\int w_{x}w_{xx}^{2}+3\gamma\int ww_{x}^{2}+3\mu\int (Pw)w_{x}^{2}\notag\\
									    &= 3\int fw_{x}^{2}+3\mu \int vw_{x}^{2}-3\epsilon \int w_{x}^{2}w_{xxxx}, \label{wxx7}
	\end{align}
since
	\begin{align*}
		3\int w_{x}^{2}w_{xxx}= -6\int w_{x}w_{xx}^{2}.
	\end{align*}
Multiplying $\eqref{KdVwReg}$ by the term $w^{3}$, we get
	\begin{align}
		\int w_{s}w^{3}+\epsilon \int w^{3}w_{xxxx}+\int w^{4}w_{x}+&\int w^{3}w_{xxx}+\gamma\int w^{4}+\mu \int (Pw)w^{3}\notag \\
								 &= \int fw^{3}+\mu \int vw^{3}. \label{wxx8}
	\end{align}
Since
	\begin{align*}
		&\int w_{s}w^{3}= \frac{d}{ds}\int \frac{1}{4}w^{4}, \text{  } \int w^{4}w_{x}= 0, \\
		&\int w^{3}w_{xxx}= -3\int w^{2}w_{x}w_{xx}= -\frac{3}{2}\int w^{2}\frac{\partial}{\partial x}(w_{x}^{2})= \frac{3}{2}\int \frac{\partial}{\partial x}(w^{2})w_{x}^{2}= 3\int ww_{x}^{3},
	\end{align*}
we may rewrite $\eqref{wxx8}$ as
	\begin{align}
		\frac{d}{ds}\int \frac{1}{4}w^{4}+&3\int ww_{x}^{3}+\gamma\int w^{4}+\mu \int (Pw)w^{3}\notag \\
								 &= \int fw^{3}+\mu \int vw^{3}-\epsilon \int w^{3}w_{xxxx}. \label{wxx9}
	\end{align}
Now we add $\eqref{wxx2}$, $\eqref{wxx6}$, $\eqref{wxx7}$ and $\eqref{wxx9}$ to get the following energy equation
	\begin{align*}
		\frac{d}{ds}\int \left(\frac{9}{5}w_{xx}^{2}-3ww_{x}^{2}+\frac{w^{4}}{4}\right)&+\gamma \int \left(\frac{18}{5}w_{xx}^{2}-9ww_{x}^{2}+w^{4}\right)+\frac{18\epsilon}{5}|w_{xxxx}|^{2}\\	
											  =& 	\int \left(\frac{18}{5}f_{xx}w_{xx}+6wfw_{xx}+3fw_{x}^{2}+fw^{3}\right) \\
											  &+ 	\mu \int \left(\frac{18}{5}v_{xx}w_{xx}+6wvw_{xx}+3vw_{x}^{2}+vw^{3}\right)\\
											  &-  	\mu \frac{18}{5} \int (Pw_{xx})^{2}+6\mu \int (Pw_{x})ww_{x}\\
											  &+3\mu \int (Pw)w_{x}^{2}-\mu \int (Pw) w^{3}\\
											  &-6\epsilon \int ww_{xx}w_{xxxx}-3\epsilon\int w_{x}^{2}w_{xxxx}-\epsilon \int w^{3}w_{xxxx}.           
	\end{align*}
We define
	 \begin{align}
		\varphi(w)=\int \left(\frac{9}{5}w_{xx}^{2}-3ww_{x}^{2}+\frac{w^{4}}{4}\right), \label{F2} 
	\end{align}
which is an absolute continuous function, then the above implies
	\begin{align*}
		\frac{d}{ds}\varphi+\gamma \varphi+\frac{18\epsilon}{5}|w_{xxxx}|^{2} = &-\gamma \int \left(\frac{9}{5}w_{xx}^{2}-6ww_{x}^{2}+\frac{3}{4}w^{4}\right)\\
									&+\int \left(\frac{18}{5}f_{xx}w_{xx}+6wfw_{xx}+3fw_{x}^{2}+fw^{3}\right) \\
									&+ \mu \int \left(\frac{18}{5}v_{xx}w_{xx}+6wvw_{xx}+3vw_{x}^{2}+vw^{3}\right)\\
								         &- \mu \frac{18}{5} \int (Pw_{xx})^{2}+6\mu \int (Pw_{x})ww_{x}\\
									&+3\mu \int (Pw)w_{x}^{2}-\mu \int (Pw) w^{3}\\
									&-6\epsilon \int ww_{xx}w_{xxxx}-3\epsilon\int w_{x}^{2}w_{xxxx}-\epsilon \int w^{3}w_{xxxx}.    
	\end{align*}
We now use Young's, H\"older's, Agmon's and interpolation inequalities along with, uniform in $\epsilon\in (0, 1)$, $L^{2}$ and $H^{1}$ bounds for each term on the right-hand side to obtain bounds in which the power on $\mu$ is minimal: 
	\begin{align*}
		6\gamma \int ww_{x}^{2}= 6\gamma \int ww_{x}w_{x}&=3\gamma \int \frac{\partial}{\partial x}(w^{2})w_{x}= -3\gamma \int w^{2}w_{xx}\\
												  &\leq 3\gamma |w|_{\infty}|w||w_{xx}|\leq 3\gamma |w|^{\frac{3}{2}}|w_{x}|^{\frac{1}{2}}|w_{xx}|\\
												  &\leq 3\gamma \mathcal{R}_{0}^{\frac{3}{2}}\tilde{\mathcal{R}}_{1}^{\frac{1}{2}}|w_{xx}|,
	\end{align*}
	\begin{align*}
		3 \int (f+\mu v)w_{x}^{2}&= 3 \int (f+\mu v)w_{x}w_{x}=-3 \int (f_{x}+\mu v_{x})w_{x}w- 3 \int (f+\mu v)w_{xx}w\\
								 &= \frac{3}{2} \int (f_{xx}+\mu v_{xx})w^{2}-3 \int (f+\mu v)ww_{xx}\\
							          &\leq \frac{3}{2} |w|_{\infty}|w|(|f|_{H^{2}}+\mu |v|_{X})+ 3 (|f|_{\infty}+\mu |v|_{\infty})|w||w_{xx}|\\	
							          &\leq  \frac{3}{2} \mathcal{R}_{0}^{\frac{3}{2}}\tilde{\mathcal{R}}_{1}^{\frac{1}{2}}(|f|_{H^{2}}+\mu |v|_{X})+3 (|f|_{\infty}+\mu |v|_{\infty})\mathcal{R}_{0}|w_{xx}|,   								       									     
	\end{align*}
	 \begin{align*}
		6\mu \int (Pw_{x})ww_{x}+3\mu \int (Pw)w_{x}^{2}&= -\frac{3}{2}\mu \int (Pw_{xx})w^{2}-3\mu \int Pw ww_{xx}\\
											     &\leq \frac{9}{2}\mu\mathcal{R}_{0}^{\frac{3}{2}}\tilde{\mathcal{R}}_{1}^{\frac{1}{2}} |w_{xx}|,
	\end{align*}
	\begin{align*}
		   \int \frac{18}{5}f_{xx}w_{xx}+ \mu \int \frac{18}{5}v_{xx}w_{xx}\leq  \frac{18}{5}(|f|_{H^{2}}+\mu |v|_{X})|w_{xx}|,
	\end{align*}
	\begin{align*}
		  \int 6wfw_{xx}+\int 6w\mu vw_{xx}&\leq 6(|f|_{\infty}+\mu|v|_{\infty})|w||w_{xx}|\\
		   						       &\leq 6(|f|_{H^{2}}+\mu|v|_{X})\mathcal{R}_{0}|w_{xx}|,
	\end{align*}
	\begin{align*}
		\int fw^{3}+ \int \mu vw^{3}-\mu \int (Pw) w^{3}\leq &(|f|_{H^{2}}+\mu |v|_{X})\mathcal{R}_{0}^{\frac{5}{2}}\tilde{\mathcal{R}}_{1}^{\frac{1}{2}}+ \mu \mathcal{R}_{0}^{3}\tilde{\mathcal{R}}_{1}.
	\end{align*}
For the terms with $\epsilon$, we have
	\begin{align*}
		&-6\epsilon \int ww_{xx}w_{xxxx}-3\epsilon\int w_{x}^{2}w_{xxxx}-\epsilon \int w^{3}w_{xxxx}\\
								  &\leq 6\epsilon |w|_{\infty}|w_{xx}||w_{xxxx}|+ 3\epsilon |w_{x}|_{\infty}|w_{x}||w_{xxxx}|+ \epsilon |w|_{\infty}^{2}|w||w_{xxxx}|\\
								  &\leq  6\epsilon |w|^{\frac{1}{2}}|w_{x}|^{\frac{1}{2}}|w_{xx}||w_{xxxx}|+ 3\epsilon |w_{x}|^{\frac{3}{2}}|w_{xx}|^{\frac{1}{2}}|w_{xxxx}|+\epsilon |w|^{2}|w_{x}||w_{xxxx}| \tag{Agmon}\\
								  &\leq  9\epsilon c|w|^{\frac{11}{8}}|w_{xxxx}|^{\frac{13}{8}}+\epsilon c|w|^{\frac{11}{4}}|w_{xxxx}|^{\frac{5}{4}} \tag{Interpolation}\\
								  &\leq \epsilon c |w|^{\frac{22}{3}}+ \frac{23\epsilon}{16}|w_{xxxx}|^{2} \tag{Young}\\
								  &\leq  \epsilon c \mathcal{R}_{0}^{\frac{22}{3}}+ \frac{18\epsilon}{5}|w_{xxxx}|^{2}.
	\end{align*}
We sum all of above, and use the fact that $|v|_{X}<\rho$, to get 		
	\begin{align*}
		\frac{d}{ds}\varphi+\gamma \varphi \leq -\frac{9}{5}\gamma|w_{xx}|^{2}+\tilde{C}_{1}|w_{xx}| +\tilde{C_{2}}+ \epsilon c \mathcal{R}_{0}^{\frac{22}{3}},
	\end{align*}
where
	\begin{align}
		\tilde{C}_{1}:= &3\gamma \mathcal{R}_{0}^{\frac{3}{2}}\tilde{\mathcal{R}}_{1}^{\frac{1}{2}}+3 (|f|_{\infty}+\mu \rho)\mathcal{R}_{0}+\frac{9}{2}\mu \mathcal{R}_{0}^{\frac{3}{2}}\tilde{\mathcal{R}}_{1}^{\frac{1}{2}}\notag\\
				       &+ \frac{18}{5}(|f|_{H^{2}}+\mu \rho)+6(|f|_{H^{2}}+\mu\rho)\mathcal{R}_{0},\label{c1}
	\end{align}
and
	\begin{align}
		\tilde{C}_{2}:=& \frac{3}{2} \mathcal{R}_{0}^{\frac{3}{2}}\tilde{\mathcal{R}}_{1}^{\frac{1}{2}}(|f|_{H^{2}}+\mu \rho)+ (|f|_{H^{2}}+\mu \rho)\mathcal{R}_{0}^{\frac{5}{2}}\tilde{\mathcal{R}}_{1}^{\frac{1}{2}}+ \mu \mathcal{R}_{0}^{3}\tilde{\mathcal{R}}_{1}. \label{c2}
	\end{align}	
We note that $\tilde{C}_{1}=O(\mu^{\frac{7\alpha+1}{6}})$ and $\tilde{C}_{2}=O(\mu^{\frac{7\alpha-2}{3}})$, for $\alpha\in [1, 2)$. We use Young's inequality to get
	\begin{align*}
		\frac{d}{ds}\varphi+\gamma \varphi \leq \frac{5}{36\gamma}\tilde{C}_{1}^{2} +\tilde{C_{2}}+ \epsilon c \mathcal{R}_{0}^{\frac{22}{3}}.
	\end{align*}
Let $s\in \mathbb{R}$, and take $s_{0}<s$. Then by Gronwall's inequality, we obtain
	\begin{align*}
		\varphi(w(s)) \leq e^{-\gamma(s-s_{0})}\varphi(w(s_{0}))+\frac{5}{36\gamma^{2}}\tilde{C}_{1}^{2} +\frac{1}{\gamma}\tilde{C_{2}}+\frac{\epsilon c \mathcal{R}_{0}^{\frac{22}{3}}}{\gamma}.
	\end{align*}
Notice that for $\epsilon\in (0,1)$, we have $w\in L^{\infty}(\mathbb{R}, \dot{H}^{4})$, in particular $\varphi(w(s_{0}))$ is bounded uniformly, for every $s_{0}$. Now we let $s_{0}\rightarrow -\infty$. Since $e^{-\gamma(s-s_{0})}\varphi(w(s_{0}))\rightarrow 0$, we have
	\begin{align*}
		\varphi(w(s)) \leq \frac{5}{36\gamma^{2}}\tilde{C}_{1}^{2} +\frac{1}{\gamma}\tilde{C_{2}}+\frac{\epsilon c \mathcal{R}_{0}^{\frac{22}{3}}}{\gamma}, \text{ for all } s\in \mathbb{R}.          
	\end{align*}
Since
	\begin{align*}
		3\left |\int ww_{x}^{2}\right |=\frac{3}{2}\left |\int (w^{2})_{x}w_{x}\right |=\frac{3}{2}\left |\int w^{2}w_{xx}\right |\leq \frac{3}{2}\mathcal{R}_{0}^{\frac{3}{2}}\tilde{\mathcal{R}}_{1}^{\frac{1}{2}}|w_{xx}|,
	\end{align*}
we have
	\begin{align*}
		\varphi(w(s)) \geq \frac{9}{5} |w_{xx}|^{2}- \frac{3}{2}\mathcal{R}_{0}^{\frac{3}{2}}\tilde{\mathcal{R}}_{1}^{\frac{1}{2}}|w_{xx}|\geq |w_{xx}|^{2}- \frac{45}{64}\mathcal{R}_{0}^{3}\tilde{\mathcal{R}}_{1},
	\end{align*}
and hence
	\begin{align*}
		|w_{xx}(s))|^{2}\leq \varphi(w(s))+\frac{45}{64}\mathcal{R}_{0}^{3}\tilde{\mathcal{R}}_{1},  \text{ for all } s\in \mathbb{R}.
	\end{align*}
Thus
	\begin{align*}
		|w_{xx}(s)|\leq \sqrt{\frac{5}{36\gamma^{2}}\tilde{C}_{1}^{2} +\frac{1}{\gamma}\tilde{C}_{2}+\frac{\epsilon c \mathcal{R}_{0}^{\frac{22}{3}}}{\gamma}+\frac{45}{64}\mathcal{R}_{0}^{3}\tilde{\mathcal{R}}_{1}}=:\tilde{\mathcal{R}}_{2},  \text{ for all } s\in \mathbb{R}.
	\end{align*}
Note that $\tilde{\mathcal{R}}_{2}=O(\mu^{\frac{7\alpha+1}{6}})$, as $\mu \rightarrow \infty$, and that $\tilde{\mathcal{R}}_{2}$ is bounded uniformly in $\epsilon$, since $\epsilon\in (0, 1)$.

\subsection{More improved $H^{1}$ bound (uniform in $\epsilon$)}

We rewrite $\eqref{F1equation*}$ as
	\begin{align*}
		\frac{d}{ds}\Phi+(\gamma+\mu) \Phi+2\epsilon|w_{xxx}|^{2}= &-\gamma \int w_{x}^{2}-2\mu\int (Pw_{x})^{2}+\frac{2\gamma}{3}\int w^{3}-2\int f_{xx}w\\
							       &-2\mu\int v_{xx}Pw-\int fw^{2}+\mu\int w^{2}Pw-\mu \int vw^{2}\\
							       &+ \mu \int w_{x}^{2}- \frac{\mu}{3} \int w^{3}-\epsilon \int w_{xxxx}w^{2},
	\end{align*}
and update the bounds for the terms on the right-hand side. Here again, we use interpolation inequalities to eliminate the terms with $\epsilon$. Using Poincar\'e's  inequality, we have
	\begin{align*}
		\mu \int w_{x}^{2}= \mu |w_{x}|^{2}&=\mu |Pw_{x}|^{2}+\mu |Qw_{x}|^{2}\\
								     &\leq \mu |Pw_{x}|^{2} + \frac{\mu L^{2}}{4\pi^{2}(m+1)^{2}} |w_{xx}|^{2}\\
								     &\leq  \mu |Pw_{x}|^{2} + \frac{\mu L^{2}\tilde{\mathcal{R}}_{2}^{2}}{4\pi^{2}(m+1)^{2}}.
	\end{align*}
We assume that $m$ is large enough such that
	\begin{align}
		\frac{\mu L^{2}\tilde{\mathcal{R}}_{2}^{2}}{4\pi^{2}(m+1)^{2}}\leq \mu^{\beta}, \label{condition2}
	\end{align}
for some $\beta >0$, to be determined later. Then, since $|f|_{\infty}\leq |f|_{H^{2}}$ and $|v|_{X}<\rho$, we have
	\begin{align*}
		\frac{d}{ds}\Phi+(\gamma+\mu) \Phi\leq &\frac{(\gamma+\mu)^{\frac{4}{3}}}{\gamma^{\frac{1}{3}}}\mathcal{R}_{0}^{\frac{10}{3}}\\
								   &+ (|f|_{H^{2}}+\mu\rho)\mathcal{R}_{0}^{2}+ 2(|f|_{H^{2}}+\mu\rho)\mathcal{R}_{0}+c \epsilon \mathcal{R}_{0}^{6}+\mu^{\beta}.
	\end{align*}
Thus, by $\eqref{hoca2}$, $|w_{x}|\leq \mathcal{R}_{1}$ where
	\begin{align}
		\mathcal{R}_{1}^{2}:=& \frac{2}{\gamma+\mu}\left\{\left(\frac{(\gamma+\mu)^{\frac{4}{3}}}{\gamma^{\frac{1}{3}}}+(\gamma+\mu)\right)\mathcal{R}_{0}^{\frac{10}{3}}\right\}\notag\\
						 &+\frac{2}{\gamma+\mu}\left\{(|f|_{H^{2}}+\mu\rho)\mathcal{R}_{0}^{2}+ (2|f|_{H^{2}}+2\mu\rho)\mathcal{R}_{0}+c \epsilon \mathcal{R}_{0}^{6}+\mu^{\beta}\right\}. \label{wx}
	\end{align}
We see that
	\[ \mathcal{R}_{1} = \begin{cases}
      		 O(\mu^{\frac{\beta-1}{2}}) & \textrm{ if $5\alpha\leq 3\beta+1$,} \\
      		 O(\mu^{\frac{5\alpha-4}{6}})& \textrm{ if $5\alpha> 3\beta+1$,} \\
   	\end{cases} \]
as $\mu \rightarrow \infty$, and $\mathcal{R}_{1}=O(\gamma^{-\frac{5}{3}}, |f|_{H^{2}}^{\frac{5}{3}})$, as $\gamma\rightarrow 0$ and $|f|_{H^{2}}\rightarrow \infty$. 	

\subsection{Improved $H^{2}$ bound (uniform in $\epsilon$)}
We make similar estimates as we did above for finding $H^{2}$ bound using the new $H^{1}$ bound $\mathcal{R}_{1}$. We obtain that
	\begin{align} \label{rtwo}
		\sup_{s\in \mathbb{R}}|w_{xx}(s)|\leq \sqrt{\frac{5}{36\gamma^{2}}C_{1}^{2} +\frac{1}{\gamma}C_{2}+\frac{\epsilon c \mathcal{R}_{0}^{\frac{22}{3}}}{\gamma}+\frac{45}{64}\mathcal{R}_{0}^{3}\mathcal{R}_{1}}=:\mathcal{R}_{2}. 	
	\end{align}
where $C_{1}$, $C_{2}$ are as in $\eqref{c1}$, $\eqref{c2}$ but with $\tilde{\mathcal{R}}_{1}$ replaced by $\mathcal{R}_{1}$. We find that
	\[ C_{1} = \begin{cases}
      		O(\mu^{\frac{3\alpha+\beta}{4}}) & \textrm{ if $5\alpha\leq 3\beta+1$,} \\
      		O(\mu^{\frac{14\alpha-1}{12}}) & \textrm{ if $5\alpha> 3\beta+1$,} \\
   	\end{cases} \]
	\[ C_{2} = \begin{cases}
      		O(\mu^{\frac{3\alpha+\beta-2}{2}}) & \textrm{ if $5\alpha\leq 3\beta+1$,} \\
      		O(\mu^{\frac{14\alpha-7}{6}}) & \textrm{ if $5\alpha> 3\beta+1$,} \\
   	\end{cases} \]
where $C_{1}=O(\gamma^{-\frac{7}{3}}, |f|_{H^{2}}^{\frac{7}{3}})$ and $C_{2}=O(\gamma^{-\frac{14}{3}}, |f|_{H^{2}}^{\frac{14}{3}})$ . Thus, for $\alpha\in [1,2)$,
 	\[ \mathcal{R}_{2} = \begin{cases}
      		O(\mu^{\frac{3\alpha+\beta}{4}}) & \textrm{ if $5\alpha\leq 3\beta+1$,} \\
      		O(\mu^{\frac{14\alpha-1}{12}}) & \textrm{ if $5\alpha> 3\beta+1$,} \\
   	\end{cases} \]
as $\mu \rightarrow \infty$, and $\mathcal{R}_{2}=O(\gamma^{-\frac{10}{3}}, |f|_{H^{2}}^{\frac{7}{3}})$, as $\gamma\rightarrow 0$ and $|f|_{H^{2}}\rightarrow \infty$.	

\subsection{$L^{\infty}$ bound (uniform in $\epsilon$)}
Using Agmon's inequality $\eqref{Agmon}$, we find that
	\begin{align*}
		|w(s)|_{\infty}^{2}\leq |w(s)||w_{x}(s)|\leq \mathcal{R}_{0}\mathcal{R}_{1}, \text{ for all } s\in \mathbb{R}.
	\end{align*}
Thus
	\begin{align*}
		\sup_{s\in \mathbb{R}}|w(s)|_{\infty}\leq \mathcal{R}_{\infty},
	\end{align*}
where
	\begin{align}
		\mathcal{R}_{\infty}:= \mathcal{R}_{0}^{\frac{1}{2}}\mathcal{R}_{1}^{\frac{1}{2}}. \label{winfty}
	\end{align}	
We observe that, for $\alpha\in [1, 2)$,
	\[ \mathcal{R}_{\infty} = \begin{cases}
      		O(\mu^{\frac{\alpha+\beta-2}{4}}) & \textrm{ if $5\alpha\leq 3\beta+1$,} \\
      		O(\mu^{\frac{8\alpha-7}{12}}) & \textrm{ if $5\alpha> 3\beta+1$,} \\
   	\end{cases} \]
as $\mu \rightarrow \infty$, and $\mathcal{R}_{\infty}=O(\gamma^{-\frac{4}{3}}, |f|_{H^{2}}^{\frac{4}{3}})$, as $\gamma\rightarrow 0$ and $|f|_{H^{2}}\rightarrow \infty$.
\newtheorem{rk1}{Remark}
	\begin{rk1}
		We remark that for the case when $\mu=0$ in equation $\eqref{KdVwReg}$, and under the assumption that the solution $w$ belongs to the global attractor of $\eqref{KdV}$, the above estimates are still valid, which yields an explicit bound for $R$ in $\eqref{Rkdv1}$, i.e., $R=\mathcal{R}_{2}|_{\mu=0}=\mathcal{R}_{2}^{0}$.
	\end{rk1}

\subsection{Time derivative bound}
Let $\theta\in \dot{H}^{2}$. Since $\eqref{KdVwReg}$ holds in $L^{2}_{loc}(\mathbb{R}, \dot{H}^{2})$ and $w\in L^{\infty}(\mathcal{R}, \dot{H}^{4})$, we multiply $\eqref{KdVwReg}$ by $\theta$, and integrate over the spatial domain to get
	\begin{align*}
		\int w_{s}\theta +\epsilon \int w_{xxxx}\theta+\int ww_{x}\theta+ \int w_{xxx}\theta+ \gamma \int w\theta+\mu \int Pw\theta=\int (f+\mu v)\theta.
	\end{align*}
Using integration by parts we get,
	\begin{align*}
		\int w_{s}\theta &=-\epsilon \int w_{xx}\theta_{xx}+\frac{1}{2}\int w^{2}\theta_{x}-\int w_{x}\theta_{xx}\\
				        &\quad-\gamma \int w\theta-\mu \int Pw\theta+\int (f+\mu v)\theta\\
				        &\leq \epsilon |w_{xx}||\theta_{xx}|+\frac{1}{2}|w|^{2}|\theta_{x}|_{\infty}+|w_{x}||\theta_{xx}|\\
				        &\quad+ ((\gamma+\mu) |w|+|f|+\mu |v|_{X})|\theta|\\
				        &\leq \{\mathcal{R}_{2}+\frac{1}{2}\mathcal{R}_{0}^{2}+\mathcal{R}_{1}+(\gamma+\mu)\mathcal{R}_{0}+|f|+\mu |v|_{X}\}|\theta|_{H^{2}}.
	\end{align*}
Thus, since $|v|_{X}<\rho$, we have
	\begin{align*}
		|\frac{dw}{ds}(s)|_{\dot{H}^{-2}}\leq \tilde{\mathcal{R'}}, \text{ for all } s\in \mathbb{R},
	\end{align*}	
where
	\begin{align}
		\tilde{\mathcal{R'}}:= \mathcal{R}_{2}+\frac{1}{2}\mathcal{R}_{0}^{2}+\mathcal{R}_{1}+(\gamma+\mu)\mathcal{R}_{0}+|f|+\mu \rho, \label{w'}
	\end{align}
with
	$$\tilde{\mathcal{R'}}=O(\mu^{\frac{14\alpha-1}{12}}),$$
as $\mu \rightarrow \infty$, and $\tilde{\mathcal{R'}}=O(\gamma^{-\frac{10}{3}}, |f|_{H^{2}}^{\frac{7}{3}})$, as $\gamma\rightarrow 0$ and $|f|_{H^{2}}\rightarrow \infty$, after taking $\epsilon >0 $ as small as necessary.

\subsection{Passing to the limit}
Summarizing the above $\epsilon$-independent bounds, we have
	$$w^{\epsilon}\in L^{\infty}(\mathbb{R}, \dot{H}^{2}),$$
 and
 	$$\frac{\partial w^{\epsilon}}{\partial s}\in L^{\infty}(\mathbb{R}, \dot{H}^{-2}),$$
and the bounds for the corresponding norms are uniform in $\epsilon$. Thus, by the Aubin compactness theorem and a Cantor diagonal argument, with respect to the sequence $M$, there exists a subsequence $w^{\epsilon_{j}}$ of $w^{\epsilon}$ and $w\in L^{\infty}(\mathbb{R}, \dot{H}^{2})$ such that
	\begin{align*}
		&w^{\epsilon_{j}}\rightharpoonup w \text{ weak-}\star \text{ in } L^{\infty}(\mathbb{R}, \dot{H}^{2}),\\
		&w^{\epsilon_{j}}\rightarrow w \text{ strongly in } L^{2}_{loc}(\mathbb{R}, \dot{H}^{1}),\\
		&\frac{\partial w^{\epsilon_{j}}}{\partial s}\rightharpoonup \frac{dw}{ds} \text{ weak-}\star \text{ in } L^{\infty}(\mathbb{R}, \dot{H}^{-2}),
	\end{align*}	
as $\epsilon_{j}\rightarrow 0$, and $w$ obeys the uniform in $\epsilon$ bounds we established in previous steps for $w^{\epsilon}$. Thus, we can pass to the limit, in the sense of distributions, to get a solution of $\eqref{KdVw}$ as a limit of one for $\eqref{KdVwReg}$. This completes the proof of the existence of bounded solutions, on all of $\mathbb{R}$, to the equation $\eqref{KdVw}$, i.e., $w\in Y$. We note that since
	\begin{align} \label{eqn3}
		\frac{\partial w}{\partial s}= -ww_{x}-w_{xxx}-\gamma w-\mu Pw+f+\mu v,
	\end{align}
we have that $\frac{\partial w}{\partial s}\in \dot{H}^{-1}$, and the above equation holds in $L^{\infty}(\mathbb{R}, \dot{H}^{-1})$. Thus, similar to the previous section, from $\eqref{eqn3}$, one can show that
	$$\sup_{s\in \mathbb{R}}|\frac{\partial w}{\partial s}(s)|_{\dot{H}^{-1}}\leq \mathcal{R'},$$
where
	$$\mathcal{R'}:= \frac{1}{2}\mathcal{R}_{0}^{\frac{3}{2}}\mathcal{R}_{1}^{\frac{1}{2}}+\mathcal{R}_{2}+(\gamma+\mu)\mathcal{R}_{0}+|f|+\mu \rho.$$
We note that
  	\[ \mathcal{R}^{'} = \begin{cases}
      		O(\mu^{\frac{3\alpha+\beta}{4}}) & \textrm{ if $5\alpha\leq 3\beta+1$,} \\
      		O(\mu^{\frac{14\alpha-1}{12}}) & \textrm{ if $5\alpha> 3\beta+1$,} \\
   	\end{cases} \]

\section{Uniqueness of the solution}

\newtheorem{ANA}{Theorem}
\begin{ANA}
\label{uniq1}
Let $\rho=4R$, where $R$ is given in $\eqref{Rkdv1}$. Assume that conditions $\eqref{condition1}$, $\eqref{condition2}$ hold along with
	\begin{align}
		C_{3}\leq 2\mu, \label{condition3} 
	\end{align}
and assume that $m$ is large enough such that 	
	\begin{align}
		 \frac{C_{3}L^{2}}{8\pi^{2}(m+1)^{2}}\frac{1}{\gamma^{2}}\left[ (2\gamma+2\mu)\mathcal{R}_{\infty}+2\mathcal{R^{'}}^{4}\gamma^{-3}\right]\	
\leq \frac{1}{2}, \label{condition4} 
	\end{align}	
holds where $C_{3}$ is defined in $\eqref{c3}$, below. Then for any $v\in \mathcal{B}_{X}^{\rho}(0)\subset X$, where X is defined as in $\eqref{X}$, there exists a unique bounded solution $w\in Y$ of $\eqref{KdVw}$, where $Y$ is defined as in $\eqref{Y}$.
\end{ANA}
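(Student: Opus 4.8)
Existence of a bounded solution $w\in Y$ to $\eqref{KdVw}$ has already been produced in Section 4 via the parabolic regularization $\eqref{KdVwReg}$, Galerkin truncation, and the $\epsilon$-uniform a priori bounds; so the new content of the theorem is \emph{uniqueness}. The plan is to take two bounded solutions, form their difference, run an $L^2$ energy estimate, and close it by absorbing the nonlinearity into the damping and the feedback term, the high frequencies being tamed by a reverse Poincar\'e inequality. Concretely, suppose $w_1,w_2\in Y$ both solve $\eqref{KdVw}$ for the same $v\in\mathcal{B}_{X}^{\rho}(0)$ and set $\delta:=w_1-w_2$. Since $w_i(w_i)_x=\tfrac12\partial_x(w_i^2)$, subtracting the two equations gives
\begin{align*}
\partial_s\delta+\tfrac12\partial_x\big((w_1+w_2)\delta\big)+\delta_{xxx}+\gamma\delta+\mu P\delta=0,
\end{align*}
an identity that holds in $\dot{H}^{-1}$, exactly as $\eqref{eqn3}$ holds for a single solution. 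Because $\delta\in L^{\infty}(\mathbb{R},\dot{H}^{2})$ while $\partial_s\delta\in L^{\infty}(\mathbb{R},\dot{H}^{-1})$, the pairing with $\delta\in\dot{H}^{1}$ is legitimate and $\tfrac{d}{ds}|\delta|^2=2\langle\partial_s\delta,\delta\rangle$. Using $\int\delta_{xxx}\delta=0$ (cf. $\eqref{hoca1}$), $\int(P\delta)\,\delta=|P\delta|^2$, and one integration by parts in the transport term, we reach the energy identity
\begin{align*}
\tfrac12\frac{d}{ds}|\delta|^2+\gamma|\delta|^2+\mu|P\delta|^2=-\tfrac14\int (w_1+w_2)_x\,\delta^2,
\end{align*}
so that uniqueness hinges entirely on the single nonlinear term on the right.

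Estimating that term directly by $\tfrac14|(w_1+w_2)_x|_\infty|\delta|^2$ and invoking only the a priori bounds $\mathcal{R}_1,\mathcal{R}_2,\mathcal{R}_{\infty}$ of Section 4 produces a constant $C_3$ (the one of $\eqref{c3}$) that carries \emph{no} smallness and hence cannot be swallowed by $\gamma|\delta|^2$. Following \cite{JST}, the remedy is to split $\delta=P\delta+Q\delta$ with $Q=I-P$ and treat the two ranges separately. The low-frequency part is matched against the feedback term: after absorbing $\tfrac{C_3}{2}|P\delta|^2$ into $\mu|P\delta|^2$, which is exactly the role of condition $\eqref{condition3}$ ($C_3\le2\mu$), one is left with
\begin{align*}
\tfrac12\frac{d}{ds}|\delta|^2+\gamma|\delta|^2\le\tfrac{C_3}{2}|Q\delta|^2 .
\end{align*}
It remains to make $\tfrac{C_3}{2}|Q\delta|^2$ small compared with the damping, and this is the crux.

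Since the equation is dispersive rather than parabolic, ordinary Poincar\'e controls a function by its derivative (the wrong direction) and there is no smoothing to bound the high modes of $\delta$ by interpolation. The substitute is a reverse Poincar\'e inequality read off from the difference equation itself: solving it for $\delta_{xxx}$ and pairing against a suitable test field, the transport and feedback contributions are controlled by the uniform bound $\mathcal{R}_{\infty}$ on $w_1+w_2$, while the time derivative is absorbed through the uniform negative-norm bound $\mathcal{R}'$ of Section 4. This yields a control of the form
\begin{align*}
\sup_{s\in\mathbb{R}}|\delta_x(s)|^2\le\frac{1}{\gamma^2}\Big[(2\gamma+2\mu)\mathcal{R}_{\infty}+2(\mathcal{R}')^{4}\gamma^{-3}\Big]\sup_{s\in\mathbb{R}}|\delta(s)|^2 .
\end{align*}
Combining it with the single high-mode Poincar\'e factor $|Q\delta|\le\tfrac{L}{2\pi(m+1)}|(Q\delta)_x|\le\tfrac{L}{2\pi(m+1)}|\delta_x|$ gives
\begin{align*}
\tfrac{C_3}{2}|Q\delta|^2\le\frac{C_3L^2}{8\pi^2(m+1)^2}\,\frac{1}{\gamma^2}\Big[(2\gamma+2\mu)\mathcal{R}_{\infty}+2(\mathcal{R}')^{4}\gamma^{-3}\Big]\,\sup_{s\in\mathbb{R}}|\delta(s)|^2,
\end{align*}
and condition $\eqref{condition4}$ is precisely the requirement that this constant be at most $\tfrac12$ (with $\eqref{condition1}$ and $\eqref{condition2}$ in force to validate the a priori bounds that enter $C_3$, $\mathcal{R}_{\infty}$ and $\mathcal{R}'$). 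Feeding this back into the energy inequality, the high-mode remainder is dominated by the damping and one obtains a differential inequality forcing exponential decay of $|\delta|^2$. A backward-in-time Gronwall argument then finishes: fixing $s$ and integrating on $[s_0,s]$ yields an estimate of the type $|\delta(s)|^2\le e^{-c\gamma(s-s_0)}|\delta(s_0)|^2$, and since $w_1,w_2\in Y$ keep $|\delta(s_0)|$ bounded while the exponential $\to0$ as $s_0\to-\infty$, we conclude $\delta\equiv0$, i.e. $w_1=w_2$.

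The hard part is the reverse Poincar\'e inequality, and its difficulty is structural. Without viscosity there is no parabolic gain to tame the high frequencies of $\delta$, so the bound on $|\delta_x|$ by $|\delta|$ cannot come from smoothing and must instead be extracted from the evolution, which is what forces the negative-norm time-derivative bound $\mathcal{R}'$ into the estimate. Producing that inequality with a constant that is simultaneously beaten by the feedback strength (via $C_3\le2\mu$) and driven below $\tfrac12$ for large $m$ (via $\eqref{condition4}$) is the delicate balance, and it is exactly why both a large damping-feedback parameter $\mu$ and a large number of modes $m$ are needed.
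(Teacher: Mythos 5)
Your overall architecture matches the paper's: form the difference $\delta$, run the $L^2$ energy identity, absorb $C_3|P\delta|^2$ into the feedback term via \eqref{condition3}, control $C_3|Q\delta|^2$ by Poincar\'e on the high modes plus a reverse Poincar\'e inequality, and close with a backward-in-time Gronwall contraction under \eqref{condition4}. But the proof has a genuine gap at exactly the step you yourself flag as the hard part: the reverse Poincar\'e inequality is asserted, not derived, and the method you gesture at (``solving it for $\delta_{xxx}$ and pairing against a suitable test field'') is not how it can be obtained and would not work as described. The difficulty is that the naive $H^1$ energy estimate for $\delta$ produces the term $\int \delta_x(\xi\delta)_{xx}$, which involves $\delta_{xx}$ with no sign and cannot be closed by the available a priori bounds. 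The paper's resolution is the compound Hamiltonian-type functional $\Psi(\delta)=\int(\delta_x^2-\xi\delta^2)$ of \eqref{F3}: one derives the $H^1$ identity \eqref{hisse1} and separately pairs the difference equation against $-\xi\delta$ to get \eqref{hisse2}, and \emph{adding the two cancels} $\int\delta_x(\xi\delta)_{xx}$ exactly, leaving only terms controlled by $\mathcal{R}_\infty$ and by $\langle\xi_s,\delta^2\rangle\le 2\mathcal{R'}^4\gamma^{-3}|\delta|^2+\gamma|\delta_x|^2$ (which is where $\mathcal{R}'$ enters). Gronwall for $\Psi$ plus the lower bound $\Psi\ge|\delta_x|^2-\mathcal{R}_\infty|\delta|^2$ then yields \eqref{revP}. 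A second, related omission: because the difference equation holds only in $\dot{H}^{-1}$, it cannot be paired with $\delta_{xx}$ directly, so the $H^1$ identity \eqref{hisse1} must itself be justified at the level of individual Fourier coefficients, with the interchange of $\sum_k$ and $d/ds$ proved via an $\ell^1$ dominating sequence as in \eqref{chder}. None of this is present or replaceable by the sketch you give.

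There is also a bookkeeping error that would prevent your version from closing even granting the inequality: you state the reverse Poincar\'e with prefactor $\gamma^{-2}$, whereas the paper's \eqref{revP} carries $\gamma^{-1}$; the second factor of $(2\gamma)^{-1}$ in \eqref{condition4} is generated only by the \emph{final} Gronwall step applied to $\frac{d}{ds}|\delta|^2+2\gamma|\delta|^2\le(\cdots)\sup_s|\delta(s)|^2$. With your constants, applying \eqref{condition4} before that Gronwall leaves a contraction factor $A/\gamma$ with $A\le\tfrac12$, which is less than $1$ only if $\gamma>\tfrac12$ --- not assumed. Relatedly, since the right-hand side of the closed inequality involves $\sup_{s}|\delta(s)|^2$ rather than $|\delta(s)|^2$, the conclusion is not pure exponential decay $|\delta(s)|^2\le e^{-c\gamma(s-s_0)}|\delta(s_0)|^2$ as you write, but the fixed-point contraction $\sup_s|\delta(s)|^2\le\tfrac12\sup_s|\delta(s)|^2$, whence $\delta\equiv 0$.
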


\begin{proof}
Suppose there exist two bounded solutions of $\eqref{KdVw}$, $w$ and $\tilde{w}$, in $Y$ corresponding to the same $v\in \mathcal{B}_{X}^{\rho}(0)$. 
	$$w_s+ww_{x}+w_{xxx}+\gamma w=f- \mu [P_m(w)-v],$$
	$$\tilde{w}_s+\tilde{w}\tilde{w}_{x}+\tilde{w}_{xxx}+\gamma \tilde{w}=f- \mu [P_m(\tilde{w})-v].$$
Subtract, denoting $\delta:=w-\tilde{w}$, to obtain
	\begin{align*}
		\delta_s+ww_{x}-\tilde{w}\tilde{w}_{x} +\delta_{xxx}+\gamma \delta=-\mu P_m\delta \label{deltaeta3}.
	\end{align*}
Note that
	\begin{align*}
		ww_{x}-\tilde{w}\tilde{w}_{x}=\frac{1}{2}\frac{\partial }{\partial x}(w^{2}-\tilde{w}^{2})=(\xi \delta)_{x},	\qquad \text{ where } \xi=\frac{w+\tilde{w}}{2},			
	\end{align*}\\\
and hence
	\begin{equation}
		\delta_s+(\xi \delta)_{x}+\delta_{xxx}+ \gamma \delta=-\mu P \delta. \label{attractordelta0}
	\end{equation}\\\
Note that the equation for the difference $\eqref{attractordelta0}$ holds only in $L^{\infty}(\mathbb{R}, \dot{H}^{-1})$. Thus, the equation does not act on $\delta_{xx}\in \dot{L}^{2}$. But it acts on $e^{ikx\frac{2\pi}{L}}$(for simplicity, we take $L=2\pi$ for this section). So we can write $\eqref{attractordelta0}$ at the level of $k$-th Fourier coefficient,
	\begin{align}
		\frac{d\delta_k}{ds}+ikc_{k}-ik^{3}\delta_{k}+ \gamma \delta_{k}= -\mu \delta_{k}\chi_{|k|\leq m}, \label{Fourier}
	\end{align}
where $k\in \mathbb{Z}\setminus\{0\}$, $\delta_{k}$ is the $k$-th Fourier coefficient of $\delta$, $c_{k}$ is the $k$-th Fourier coefficient of $\xi \delta\in L^{\infty}(\mathbb{R}, H^{2})$, and $\chi_{k\leq m}$ is $1$ when $|k|\leq m$ and zero otherwise. $\eqref{Fourier}$ is an ordinary differential equation in $\mathbb{C}$. We multiply $\eqref{Fourier}$ by $k^{2}\bar{\delta}_{k}\in \mathbb{C}$, to get
	\begin{align*}
		k^{2}\frac{d\delta_k}{ds}\bar{\delta}_{k}+ik^{3}c_{k}\bar{\delta}_{k}-ik^{5}|\delta_{k}|^{2}+ \gamma k^{2}|\delta_{k}|^{2}= -\mu k^{2}|\delta_{k}|^{2}\chi_{|k|\leq m}.
	\end{align*}
We take the real parts above to obtain
	\begin{align}
		\frac{1}{2}k^{2}\frac{d|\delta_k|^{2}}{ds}+Re\{ik^{3}c_{k}\bar{\delta}_{k}\}+ \gamma k^{2}|\delta_{k}|^{2}= -\mu k^{2}|\delta_{k}|^{2}\chi_{|k|\leq m}. \label{dom}
	\end{align}
We claim that
	\begin{align}
		\sum_{k\in \mathbb{Z}\setminus\{0\}}k^{2}\frac{d|\delta_k|^{2}}{ds}= \frac{d}{ds} \sum_{k\in \mathbb{Z}\setminus\{0\}}k^{2}|\delta_k|^{2}. \label{chder}
	\end{align}
To prove the above claim, we apply a consequence of the Weirstrass convergence theorem, i.e., the series of derivatives converges uniformly. Therefore, we need to find a sequence $g(k)\in \ell^{1}$ such that $|k^{2}\frac{d|\delta_k|^{2}}{ds}|\leq g(k)$. From $\eqref{dom}$, we have
	\begin{align*}
		|\frac{1}{2}k^{2}\frac{d|\delta_k|^{2}}{ds}|&=|-Re\{ik^{3}c_{k}\bar{\delta}_{k}\}-\gamma k^{2}|\delta_{k}|^{2}-\mu k^{2}|\delta_{k}|^{2}\chi_{|k|\leq m}|\\
			&\leq k^{3}|c_{k}||\delta_{k}|+(\gamma+\mu)k^{2}|\delta_{k}|^{2}\\
			&\leq \frac{1}{2}k^{2}|c_{k}|^{2}+\frac{1}{2}k^{4}|\delta_{k}|^{2}+(\gamma+\mu)k^{2}|\delta_{k}|^{2}.
	\end{align*}
Thus,
	\begin{align*}
		|k^{2}\frac{d|\delta_k|^{2}}{ds}|\leq k^{2}|c_{k}|^{2}+k^{4}|\delta_{k}|^{2}+2(\gamma+\mu)k^{2}|\delta_{k}|^{2}=:g(k).
	\end{align*}
Then,
	\begin{align*}
		\sum_{k\in \mathbb{Z}\setminus\{0\}} g(k)&=\sum_{k\in \mathbb{Z}\setminus\{0\}}k^{2}|c_{k}|^{2}+\sum_{k\in \mathbb{Z}\setminus\{0\}}k^{4}|\delta_{k}|^{2}+2(\gamma+\mu)\sum_{k\in \mathbb{Z}\setminus\{0\}}k^{2}|\delta_{k}|^{2}\\
		&=|(\xi \delta)_{x}|^{2}+|\delta_{xx}|^{2}+2(\gamma+\mu)|\delta_{x}|^{2}\\
		&\leq 16\mathcal{R}_{0}^{2}\mathcal{R}_{1}^{2}+4\mathcal{R}_{2}^{2}+8(\gamma+\mu)\mathcal{R}_{1}^{2}< \infty,
	\end{align*}		
thus $g(k)\in \ell^{1}$, which concludes the proof of $\eqref{chder}$. Thus we can take sum over all $k\in \mathbb{Z}\setminus\{0\}$ in $\eqref{dom}$, to get
	\begin{align}
		\frac{1}{2}\frac{d}{ds}\sum_{k\in \mathbb{Z}\setminus\{0\}}k^{2}|\delta_k|^{2}&+\sum_{k\in \mathbb{Z}\setminus\{0\}}Re\{ik^{3}c_{k}\bar{\delta}_{k}\}+ \gamma \sum_{k\in \mathbb{Z}\setminus\{0\}}k^{2}|\delta_{k}|^{2}\notag\\
		&= -\mu \sum_{k\in \mathbb{Z}\setminus\{0\}}k^{2}|\delta_{k}|^{2}\chi_{|k|\leq m}. \label{sumfourier}
	\end{align}
We realize that
	\begin{align*}
		\sum_{k\in \mathbb{Z}\setminus\{0\}}Re\{ik^{3}c_{k}\bar{\delta}_{k}\}&=\sum_{k\in \mathbb{Z}\setminus\{0\}}Re\{-ik^{3}\delta_{k}\bar{c}_{k}\}\\
		&=Re\{\sum_{k\in \mathbb{Z}\setminus\{0\}}-ik^{3}\delta_{k}\bar{c}_{k}\}\\
		&=\int \delta_{x}(\xi \delta)_{xx}.
	\end{align*}
Thus $\eqref{sumfourier}$ becomes
	\begin{align}
		\frac{1}{2}\frac{d}{ds}|\delta_{x}|^{2}+\int \delta_{x}(\xi \delta)_{xx} + \gamma |\delta_{x}|^{2}= -\mu |P\delta_{x}|^{2}. \label{hisse1}
	\end{align}
Take $\dot{H}^{-2}$ action of $\eqref{attractordelta0}$ on $-\xi \delta \in H^{2}$ to obtain
	\begin{align}
		-<\delta_{s},\xi \delta>-\int \delta_{x}(\xi \delta)_{xx} - \gamma \int \xi \delta^{2}= \mu \int \xi \delta P\delta. \label{hisse2}
	\end{align}
One can verify, since $H^{1}$ is an algebra, that
	\begin{align*}
		\frac{d}{ds}\int \xi \delta^{2}= <\xi_{s}, \delta^{2}>+ 2<\delta_{s}, \xi\delta>.
	\end{align*}
Therefore, we use this observation and add $\eqref{hisse1}$ and $\eqref{hisse2}$, to get
	\begin{align*}
		\frac{d}{ds}\Psi+\gamma \Psi= -\gamma \int \delta_{x}^{2}+\gamma \int \xi \delta^{2}-2\mu \int (P\delta_{x})^{2}+2\mu \int \xi \delta P\delta-<\xi_{s},\delta^{2}>,
 	\end{align*}
where
	\begin{align}
		\Psi(\delta(s))=\int (\delta_{x}^{2}(s)-\xi(s)\delta^{2}(s)). \label{F3} 
 	\end{align}
Here we do the following estimations on each of these terms using H\"older, Young's and Agmon's inequalities:
	$$\gamma \int \xi \delta^{2}\leq \gamma |\xi|_{\infty}|\delta|^{2}\leq \gamma \mathcal{R}_{\infty} |\delta|^{2},$$
	$$-2\mu \int \xi \delta P\delta \leq 2\mu\mathcal{R}_{\infty} |\delta|^{2},$$
	$$<\xi_{s},\delta^{2}>\leq |\xi_{s}|_{H^{-1}}|\delta^{2}|_{H^{1}}\leq 2\mathcal{R}^{'}|\delta_{x}|^{\frac{3}{2}}|\delta|^{\frac{1}{2}}\leq 2\mathcal{R^{'}}^{4}\gamma^{-3}|\delta|^{2}+\gamma |\delta_{x}|^{2}.$$
Thus we get
	\begin{align*}
		\frac{d}{ds}\Psi+\gamma \Psi&\leq \left[ (\gamma+2\mu)\mathcal{R}_{\infty}+2\mathcal{R^{'}}^{4}\gamma^{-3}\right]|\delta(s)|^{2}\\
		&\leq \left[ (\gamma+2\mu)\mathcal{R}_{\infty}+2\mathcal{R^{'}}^{4}\gamma^{-3}\right]\sup_{s\in \mathbb{R}}|\delta(s)|^{2}.
 	\end{align*}	
Let $s_{0}\in \mathbb{R}$, and $s>s_{0}$. From Gronwall's inequality,
	\begin{align*}
		\Psi(\delta(s))\leq \Psi(\delta(s_{0}))e^{-\gamma(s-s_{0})}+\frac{1}{\gamma}\left[ (\gamma+2\mu)\mathcal{R}_{\infty}+2\mathcal{R^{'}}^{4}\gamma^{-3}\right]\sup_{s\in \mathbb{R}}|\delta(s)|^{2}.
 	\end{align*}	
Since
	\begin{align*}
		\Psi(\delta(s))\geq |\delta_{x}(s)|^{2}-|\xi|_{\infty}|\delta(s)|^{2}\geq |\delta_{x}(s)|^{2}-\mathcal{R}_{\infty}|\delta(s)|^{2},		
	\end{align*}
we have that
	\begin{align}
		|\delta_{x}(s)|^{2}\leq \Psi(\delta(s))+\mathcal{R}_{\infty}|\delta(s)|^{2}. \label{eqn4}
	\end{align}	
Thus,
	\begin{align}
		|\delta_{x}(s)|^{2}\leq \Psi(\delta(s_{0}))e^{-\gamma(s-s_{0})}+\frac{1}{\gamma}\left[ (2\gamma+2\mu)\mathcal{R}_{\infty}+2\mathcal{R^{'}}^{4}\gamma^{-3}\right]\sup_{s\in \mathbb{R}}|\delta(s)|^{2}.
	\end{align}
Since $\Psi(\delta(s_0))$ is uniformly bounded for all $s_{0}\in\mathbb{R}$, we let $s_{0}\rightarrow -\infty$, to obtain a `reverse' Poincar\'e type inequality
	\begin{align}
		|\delta_{x}(s)|^{2}\leq \frac{1}{\gamma}\left[ (2\gamma+2\mu)\mathcal{R}_{\infty}+2\mathcal{R^{'}}^{4}\gamma^{-3}\right]\sup_{s\in \mathbb{R}}|\delta(s)|^{2}, \label{revP}
	\end{align}
for every $s\in \mathbb{R}$. We now take the action of $\eqref{attractordelta0}$ on $2\delta$, and observe that
	\begin{align*}
		<\delta_{s}, \delta>=\frac{1}{2}\frac{d}{ds}|\delta|^{2},
	\end{align*}
and apply $\eqref{hoca1}$ to obtain
	\begin{align*}
		\frac{d}{ds}|\delta|^{2}+2\gamma |\delta|^{2}+2\mu |P\delta|^{2}=-\int 2(\xi \delta)_{x}\delta.
	\end{align*}
Here
	\begin{align} \label{eqn66}
		-\int 2(\xi \delta)_{x}\delta= \int 2\xi \delta \delta_{x}&= \int \xi \frac{\partial}{\partial x}\delta^{2}=-\int \xi_{x}\delta^{2}\notag\\
											      &\leq |\xi_{x}|_{\infty}|\delta|^{2}\notag\\
											      &\leq |\xi_{x}|^{\frac{1}{2}}|\xi_{xx}|^{\frac{1}{2}}|\delta|^2\notag\\
											      &\leq \mathcal{R}_{1}^{\frac{1}{2}}\mathcal{R}_{2}^{\frac{1}{2}}|\delta|^2\notag\\
											      &=C_{3}|\delta|^{2},
	\end{align}					
where
	\[ C_{3} = \begin{cases}
      		O(\mu^{\frac{3\alpha+3\beta-2}{8}}) & \textrm{ if $5\alpha\leq 3\beta+1$,} \\
      		O(\mu^{\frac{24\alpha-9}{24}}) & \textrm{ if $5\alpha> 3\beta+1$,} \\
   	\end{cases} \]
for $\alpha\in [1,2)$. For specific choices of $\alpha=1$ and $\beta=\frac{4}{3}$, we have	
	\begin{align}
		C_{3}:=  \mathcal{R}_{1}^{\frac{1}{2}}\mathcal{R}_{2}^{\frac{1}{2}}=O(\mu^{\frac{5}{8}}), \label{c3}
	\end{align}
as $\mu \rightarrow \infty$. Thus, 	
	\begin{align*}						
		-\int 2(\xi \delta)_{x}\delta&\leq C_{3}|P\delta|^{2}+C_{3}|Q\delta|^{2}\\
						       &\leq C_{3}|P\delta|^{2}+ \frac{C_{3}L^{2}}{4\pi^{2}(m+1)^{2}}|\delta_{x}|^{2}.\\
         \end{align*}
If we choose $\mu$ large enough such that $\eqref{condition3}$ hold, then
	\begin{align*}
		\frac{d}{ds}|\delta|^{2}+2\gamma |\delta|^{2}\leq  \frac{C_{3}L^{2}}{4\pi^{2}(m+1)^{2}}|\delta_{x}|^{2}.
	\end{align*}
Applying $\eqref{revP}$ above, we get 	
	\begin{align*}
		\frac{d}{ds}|\delta|^{2}+2\gamma |\delta|^{2}\leq  \frac{C_{3}L^{2}}{4\pi^{2}(m+1)^{2}}\frac{1}{\gamma}\left[ (2\gamma+2\mu)\mathcal{R}_{\infty}+2\mathcal{R^{'}}^{4}\gamma^{-3}\right]\sup_{s\in \mathbb{R}}|\delta(s)|^{2}.
	\end{align*}	
As before, let $s_{0}\in \mathbb{R}$, and $s>s_{0}$. Since $|\delta(s_{0})|$ is bounded uniformly for all $s_{0}\in \mathbb{R}$, we apply Gronwall's inequality on the interval $[s_{0}, s]$, and let $s_{0}\rightarrow -\infty$, to obtain
	\begin{align*}
		|\delta(s)|^{2}\leq \frac{C_{3}L^{2}}{8\pi^{2}(m+1)^{2}}\frac{1}{\gamma^{2}}\left[ (2\gamma+2\mu)\mathcal{R}_{\infty}+2\mathcal{R^{'}}^{4}\gamma^{-3}\right]\sup_{s\in \mathbb{R}}|\delta(s)|^{2},
	\end{align*}
for all $s\in \mathbb{R}$. By choosing $m$ large enough such that $\eqref{condition4}$ holds, we obtain that
	\begin{align*}
		\sup_{s\in \mathbb{R}}|\delta(s)|^{2}\leq \frac{1}{2}\sup_{s\in \mathbb{R}}|\delta(s)|^{2}.
	\end{align*}
Thus, $\delta(s)=0$, for all $s\in \mathbb{R}$, i.e., $w=\tilde{w}$.
\end{proof}

\newtheorem{rk15}[rk1]{Remark}
	\begin{rk15}
		\begin{enumerate}
			\item 	Note that conditions $\eqref{condition1}$ and $\eqref{condition2}$ enable us to choose $\mu$ large enough such that $\eqref{condition3}$ holds.
			\item 	By Theorem \ref{uniq1}, we can now define a map $W: \mathcal{B}_{X}^{\rho}(0)\rightarrow Y$ such that $W(v):=w$ is the unique bounded solution of $\eqref{KdVw}$, i.e., $w\in Y$.
		\end{enumerate}
	\end{rk15}

\newtheorem{rk2}[rk1]{Remark}
	\begin{rk2}
		\begin{enumerate}
			\item 	$\eqref{condition1}$, $\eqref{condition2}$, and $\eqref{condition4}$ dictates that
			\[ d = \begin{cases}
      		\max\{\frac{4-\alpha}{2}, \frac{7\alpha-3\beta+4}{6}, \frac{27\alpha+11\beta-2}{16}\}=\frac{27\alpha+11\beta-2}{16} & \textrm{ if $5\alpha\leq 3\beta+1$,} \\
      		\max\{\frac{4-\alpha}{2}, \frac{7\alpha-3\beta+4}{6}, \frac{136\alpha-17}{48}\}=\frac{136\alpha-17}{48} & \textrm{ if $5\alpha> 3\beta+1$,} \\
   	\end{cases} \]
where $m=O(\mu^{d})$, as $\mu \rightarrow \infty$. $\eqref{condition3}$ dictates that
	\[  \begin{cases}
      		3\alpha+3\beta<10 & \textrm{ if $5\alpha\leq 3\beta+1$,} \\
      		\alpha<\frac{11}{8} & \textrm{ if $5\alpha> 3\beta+1$.} \\
   	\end{cases} \]
We recall that $\alpha\in [1, 2)$. Thus, this is a linear optimization problem where we would like to minimize $d$ subject to the constraints. Solving the linear optimization problem, we obtain that the optimal values are $\alpha=1$ and $\beta=\frac{4}{3}$ where the minimum value of $d$ is $\frac{119}{48}$. Thus, the minimum number of the modes, $m$, that is needed is to achieve the uniqueness of the bounded solution of $\eqref{KdVw}$ is $O(\mu^{d})$, where $d= \frac{119}{48}\approx 2.48$
			\item 	For the optimal values of $\alpha$ and $\beta$, we realize that $\mathcal{R}_{0}=O(1),  \mathcal{R}_{1}= O(\mu^{\frac{1}{6}}), \mathcal{R}_{2}=O(\mu^{\frac{13}{12}}), \mathcal{R'}=O(\mu^{\frac{13}{12}}),$ and $\mathcal{R}_{\infty}= O(\mu^{\frac{1}{12}})$, as $\mu \rightarrow \infty$.
		\end{enumerate}
	\end{rk2}
	
\section{Lipschitz property of $W(v)$}

\newtheorem{W2}[ANA]{Theorem}
\begin{W2}
\label{LP}
Let $v\in \mathcal{B}_{X}^{\rho}(0):=\{v\in X; |v|_X< \rho \}$ with $\rho=4R$, and $R$ is given in $\eqref{Rkdv1}$. Assume conditions $\eqref{condition1}$, $\eqref{condition2}$, $\eqref{condition3}$ and $\eqref{condition4}$ hold. Then the map $P_mW: \mathcal{B}_{X}^{\rho}(0)\rightarrow X$ is a Lipschitz function with Lipschitz constant $L_{W}$, where
	\begin{align}
		L_{W}=\frac{4\pi^{2}m^{2}}{L^{2}}\left( \frac{C_{3}L^{2}}{2\pi^{2}\gamma(m+1)^{2}}(\mu+\mu  \mathcal{R}_{\infty})+\frac{2\mu}{\gamma}\right). \label{LW2}
	\end{align}
\end{W2}

\begin{proof}
Note that all constants $\mathcal{R}_0, \mathcal{R}_1, \mathcal{R}_2, \mathcal{R'}$ and $\mathcal{R}_\infty$ depend on $\rho=4R$. Let $v, \tilde{v}\in \mathcal{B}_{X}^{\rho}(0)$, with $W(v)=w$ and $W(\tilde{v})=\tilde{w}$, so that
	$$w_s+ww_{x}+w_{xxx}+\gamma w=f- \mu [P_m(w)-v],$$
	$$\tilde{w}_s+\tilde{w}\tilde{w}_{x}+\tilde{w}_{xxx}+\gamma \tilde{w}=f- \mu [P_m(\tilde{w})-\tilde{v}].$$
Subtract, denoting $\delta:=w-\tilde{w}$ and $\eta:=v-\tilde{v}$, to obtain
	\begin{align}
		\delta_s+(\xi\delta)_{x}+\delta_{xxx}+\gamma \delta+\mu P_m\delta= \mu \eta \label{deltaeta3},
	\end{align}
where $\xi=\frac{w+\tilde{w}}{2}$. As in the previous section, we consider the evolution of the Fourier coefficients of $\delta$ to verify that
	\begin{align}
		\frac{d}{ds}\Psi+\gamma \Psi=&-\gamma \int \delta_{x}^{2}+\gamma \int \xi \delta^{2}-2\mu \int (P\delta_{x})^{2}+2\mu \int \xi \delta P\delta-<\xi_{s},\delta^{2}> \notag \\
						                &2\mu\int \eta_{xx}\delta-2\mu\int \xi \eta \delta, \label{func3EnergyEta}
 	\end{align}
where
	\begin{align*}
		\Psi(\delta)=\int (\delta_{x}^{2}-\xi\delta^{2}),
 	\end{align*}
as in $\eqref{F3}$. For last three terms on the right-hand side of $\eqref{func3EnergyEta}$, we have
	\begin{align*}
		&<\xi_{s},\delta^{2}>\leq |\xi_{s}|_{H^{-1}}|\delta^{2}|_{H^{1}}\leq 2\mathcal{R}^{'}|\delta_{x}|^{\frac{3}{2}}|\delta|^{\frac{1}{2}}\leq 2\mathcal{R^{'}}^{4}\gamma^{-3}|\delta|^{2}+\gamma |\delta_{x}|^{2}.\\
		&2\mu\int \eta_{xx}\delta\leq 2\mu |\eta|_{X}|\delta|, \\
		&-2\mu\int \xi \eta \delta \leq 2\mu |\xi|_{\infty}|\eta|_{X}|\delta|\leq 2\mu \mathcal{R}_{\infty}|\eta|_{X}|\delta|.
 	\end{align*}
Thus we get
	\begin{align*}
		\frac{d}{ds}\Psi(\delta(s))+\gamma \Psi(\delta(s))\leq \left[ (\gamma+2\mu)\mathcal{R}_{\infty}+2\mathcal{R^{'}}^{4}\gamma^{-3}\right]|\delta(s)|^{2}+ |\eta|_{X}(2\mu+2\mu  \mathcal{R}_{\infty})|\delta(s)|.
 	\end{align*}
Let $s_{0}\in \mathbb{R}$, and $s>s_{0}$. Since $\Psi(\delta(s_{0}))$ is uniformly bounded for all $s_{0}\in \mathbb{R}$, we apply Gronwall's inequality on the interval $[s_{0}, s]$, use $\eqref{eqn4}$ and let $s_{0}\rightarrow -\infty$, to obtain
	\begin{align}
		|\delta_{x}(s)|^{2}\leq \frac{1}{\gamma}[ (2\gamma+2\mu)\mathcal{R}_{\infty}+&2\mathcal{R^{'}}^{4}\gamma^{-3}]\sup_{s\in \mathbb{R}}|\delta(s)|^{2}\notag \\
		&+|\eta|_{X}(2\mu+2\mu  \mathcal{R}_{\infty})\sup_{s\in \mathbb{R}}|\delta(s)|, \label{revP2}
	\end{align}
for all $s\in \mathbb{R}$. Now we take the action of equation $\eqref{deltaeta3}$ on $2\delta$, to obtain
	\begin{align*}
		\frac{d}{ds}|\delta|^{2}+2\gamma |\delta|^{2}+2\mu |P\delta|^{2}=-\int 2(\xi \delta)_{x}\delta+2\mu \int \eta \delta.
	\end{align*}
Thanks to $\eqref{eqn66}$, we have
	\begin{align*}
		-\int 2(\xi \delta)_{x}\delta       &\leq C_{3}|\delta|^{2},\\
							     &= C_{3}|P\delta|^{2}+C_{3}|Q\delta|^{2}\\
						       	     &\leq C_{3}|P\delta|^{2}+ \frac{C_{3}L^{2}}{4\pi^{2}(m+1)^{2}}|\delta_{x}|^{2}.
	\end{align*} 	
Also, we have
	\begin{align*}
		2\mu \int \eta \delta\leq 2\mu |\eta||\delta|\leq 2\mu|\eta|_{X}|\delta|.
	\end{align*}	
Thus, from $\eqref{condition3}$ and the above estimates, we get
	\begin{align*}
		\frac{d}{ds}|\delta|^{2}+2\gamma |\delta|^{2}\leq \frac{C_{3}L^{2}}{4\pi^{2}(m+1)^{2}}|\delta_{x}|^{2}+ 2\mu|\eta|_{X}|\delta|.
	\end{align*}
We use $\eqref{revP2}$ above, to get	
	\begin{align*}
		\frac{d}{ds}|\delta|^{2}+2\gamma |\delta|^{2}\leq & \frac{C_{3}L^{2}}{4\pi^{2}(m+1)^{2}}\frac{1}{\gamma}\left[ (2\gamma+2\mu)\mathcal{R}_{\infty}+2\mathcal{R^{'}}^{4}\gamma^{-3}\right]\sup_{s\in \mathbb{R}}|\delta(s)|^{2}\\
		&+ \frac{C_{3}L^{2}}{4\pi^{2}(m+1)^{2}}|\eta|_{X}(2\mu+2\mu  \mathcal{R}_{\infty})\sup_{s\in \mathbb{R}}|\delta(s)|\\
		&+ 2\mu|\eta|_{X}\sup_{s\in \mathbb{R}}|\delta(s)|.
	\end{align*}	
By $\eqref{condition4}$, we have
	\begin{align*}
		\frac{d}{ds}|\delta(s)|^{2}+2\gamma |\delta(s)|^{2}&\leq \gamma\sup_{s\in \mathbb{R}}|\delta(s)|^{2}+\gamma \frac{L^{2}}{4\pi^{2}m^{2}}L_{W}|\eta|_{X}\sup_{s\in \mathbb{R}}|\delta(s)|,
	\end{align*}	
where $L_{W}$ is defined in $\eqref{LW2}$. We let $s_{0}\in \mathbb{R}$, and $s>s_{0}$. Since $|\delta(s_{0})|^{2}$ is bounded for all $s_{0}\in \mathbb{R}$, by Gronwall's inequality on the interval $[s_{0}, s]$, and letting $s_{0}\rightarrow -\infty$, we obtain
	\begin{align*}
		|\delta(s)|^{2}\leq \frac{1}{2}\sup_{s\in \mathbb{R}}|\delta(s)|^{2}+ \frac{1}{2}\frac{L^{2}}{4\pi^{2}m^{2}}L_{W}|\eta|_{X}\sup_{s\in \mathbb{R}}|\delta(s)|,
	\end{align*}
for all $s\in \mathbb{R}$. Thus
	\begin{align*}
		\sup_{s\in \mathbb{R}}|\delta(s)|\leq \frac{L^{2}}{4\pi^{2}m^{2}}L_{W} |\eta|_{X}.
	\end{align*}
We note that
	\begin{align*}
		|P\delta_{xx}(s)|\leq \frac{4\pi^{2}m^{2}}{L^{2}}|P\delta(s)|\leq  \frac{4\pi^{2}m^{2}}{L^{2}}|\delta(s)|\leq L_{W}|\eta|_{X}.
	\end{align*}
Thus,
	$$|P\delta |_X \leq L_{W} |\eta|_X,$$
i.e.,
	$$|PW(v)-PW(\tilde{v})|_X\leq L_W |v-\tilde{v}|_X,$$
where $L_{W}$ is defined in $\eqref{LW2}$.
\end{proof}

\newtheorem{w=u2}[ANA]{Theorem}
\begin{w=u2}
\label{DMW}
Let $\rho=4R$, where $R$ is given by $\eqref{Rkdv1}$. Assume that $\mu$ and $m$ are large enough such that $\eqref{condition1}$, $\eqref{condition2}$, $\eqref{condition3'}$ and $\eqref{condition4}$ hold.
 \begin{enumerate}
 \item Suppose that $u$ is a trajectory in the global attractor of the damped and driven KdV equation $\eqref{KdV}$-$\eqref{boundary}$, and suppose that $w=W(P_{m}u)$ is the unique bounded solution of equation $\eqref{KdVw}$ with $v=P_mu$.  Then $w(s)=W\left(P_{m}u\right)(s)=u(s)$, for all $s\in \mathbb{R}$.
 \item Suppose $v=P_m W(v)$, for some $v\in \mathcal{B}_X^\rho(0)$. Then $W(v)$ is a trajectory in the global attractor of the damped and driven KdV equation $\eqref{KdV}$-$\eqref{boundary}$.
\end{enumerate}
\end{w=u2}	

\begin{proof}
To prove (1) we first observe that the choice of $R$ in $\eqref{Rkdv1}$ guarantees that $P_{m}u(\cdot)\in \mathcal{B}_{X}^{\rho}(0)$, for every $u(\cdot)\subset \mathcal{A}$. Taking the difference of the following equations

	\begin{align*}
		&w_s+ww_{x}+w_{xxx}+\gamma w=f- \mu [P(w-u)], \\
		&u_s+uu_{x}+u_{xxx}+\gamma u=f,
	\end{align*}
we get
	\begin{align*}
		\delta_s+ww_{x}-uu_{x}+\delta_{xxx}+\gamma \delta= -\mu P\delta,
	\end{align*}\\\
where $\delta:=w-u$. Note that
	\begin{align*}
		ww_{x}-uu_{x}&=(\xi \delta)_{x},	\qquad \text{ where } \xi=\frac{w+u}{2},			
	\end{align*}\\\
and hence
	\begin{equation}
		\delta_s+(\xi \delta)_{x}	+\delta_{xxx}+ \gamma \delta= -\mu P \delta.
	\end{equation}\\\
We then proceed as in the proof of Theorem \ref{uniq1} to obtain that $|\delta(s)|=0$, for all fixed $s\in \mathbb{R}$. Thus, $w(s)=W(P_{m}u(s))=u(s)$, for all $s\in \mathbb{R}$; which concludes the proof of (1).

To prove (2) we observe that since $v = P_mW(v)$, then the unique bounded solution, for all $s\in \mathbb{R}$,   $w(s)=W(v)(s)$, of equation \eqref{KdVw} is in fact a bounded solution, for all $s\in \mathbb{R}$, of the damped and driven KdV equation \eqref{KdV}. Therefore, by the characterization of the global attractor to be the collection of all bounded, for all $s\in \mathbb{R}$, solutions we conclude that  $w(\cdot) \subset \mathcal{A}$.
 	
\end{proof}

\section{The determining form}
Thanks to $\eqref{Rkdv1}$, we have
	\begin{align*}
		|u(s)|_{H^{2}}\leq R, \text{ for all } s\in\mathbb{R},
	\end{align*}
for every $u(\cdot)\subset \mathcal{A}$. In particular, we also have that $|P_{m}u(s)|_{H^{2}}\leq R$ for all $s\in\mathbb{R}$. Let $u^*$ be a steady state solution of the damped and driven KdV $\eqref{KdV}$. As in \cite{Form2}, we consider the following  \emph{determining form} for the damped and driven KdV in the ball $\mathcal{B}_{X}^{\rho}(0)$, with $\rho=4R$,
	\begin{align}
		\frac{dv(\tau)(s)}{d\tau}= -|v(\tau)-P_mW(v(\tau))|_{X}^2 (v(\tau)(s)- P_mu^*), \label{newformkdv}
	\end{align}
	$$v(0)=v_{0},$$
with $v_{0}\in \mathcal{B}_{X}^{\rho}(0)$, where $|\cdot|_{X}$ is defined in $\eqref{X}$. The specific conditions on $m$, and its dependence on $R$, guarantee the existence of a Lipschitz map $P_mW(v)$, for $v\in \mathcal{B}_{X}^{\rho}(0)$, which have been already proven and are stated again in the following:
\newtheorem{Formthmkdv}[ANA]{Theorem}
\begin{Formthmkdv}
\label{DF}
Let $\rho=4R$, where $R$ is given in $\eqref{Rkdv1}$, and suppose that conditions $\eqref{condition1}$,  $\eqref{condition2}$,  $\eqref{condition3}$, and $\eqref{condition4}$ hold.
	\begin{enumerate}
		\item{The vector field in the determining form $\eqref{newformkdv}$ is a Lipschitz map from the ball $\mathcal{B}_X^\rho(0)$ into $X$. Thus, $\eqref{newformkdv}$, with initial data $v_{0}\in \mathcal{B}_{X}^{\rho}(0)$, is an ODE, in $\mathcal{B}_{X}^{\rho}(0)$, which has short time existence and uniqueness. }
		\item{The ball $\mathcal{B}_X^{3R}(P_mu^*)= \{v\in X: |v-P_mu^*|_X< 3R\} \subset \mathcal{B}_X^\rho(0)$ is forward invariant in time, under the dynamics of the determining form $\eqref{newformkdv}$. Consequently, $\eqref{newformkdv}$ has global existence and uniqueness for all initial data in $\mathcal{B}_X^{3R}(P_mu^*)$.}
		\item{Every solution of  the determining form $\eqref{newformkdv}$, with initial data $v_{0}\in\mathcal{B}_X^{3R}(P_mu^*)$, converges to a steady state solution of the determining form $\eqref{newformkdv}$. }
		\item{All the steady state solutions of the determining form, $\eqref{newformkdv}$, that are contained in the ball $\mathcal{B}_X^\rho(0)$ are given by the form $v(s)=P_mu(s)$, for all $s\in \mathbb{R}$, where $u(s)$ is a trajectory that lies on the global attractor, $\mathcal{A}$, of $\eqref{KdV}$-$\eqref{boundary}$.}
	\end{enumerate}
\end{Formthmkdv}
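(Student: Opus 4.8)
The plan is to handle the four assertions in sequence, using the Lipschitz bound on $P_mW$ from Theorem \ref{LP} and the fixed-point characterization from Theorem \ref{DMW} as the two main inputs. For assertion (1), I would write the vector field of $\eqref{newformkdv}$ as a product $F(v)=-\Phi(v)\,G(v)$, where $\Phi(v):=|v-P_mW(v)|_X^2$ is scalar and $G(v):=v-P_mu^*$ is the affine $X$-valued map. The map $G$ is Lipschitz with constant $1$ and satisfies $|G(v)|_X\le|v|_X+|P_mu^*|_X<\rho+R$ on the ball. Setting $H(v):=v-P_mW(v)$, Theorem \ref{LP} makes $H$ Lipschitz with constant $1+L_W$, and $H$ is bounded since $|P_mW(v)|_X\le|W(v)|_Y\le\mathcal{R}_2$. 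Writing $|\Phi(v_1)-\Phi(v_2)|=\big||H(v_1)|_X-|H(v_2)|_X\big|\,\big(|H(v_1)|_X+|H(v_2)|_X\big)$ and applying the reverse triangle inequality then shows $\Phi$ is bounded and Lipschitz. A bounded product of bounded Lipschitz maps is Lipschitz, so $F$ maps $\mathcal{B}_X^\rho(0)$ Lipschitz-continuously into $X$, and the Picard--Lindel\"of theorem in the Banach space $X$ yields short-time existence and uniqueness.

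The structural observation driving (2) and (3) is that, along any solution, $r(\tau):=v(\tau)-P_mu^*$ obeys the linear scalar-coefficient equation $\frac{dr}{d\tau}=-\phi(\tau)\,r$ with $\phi(\tau):=|v(\tau)-P_mW(v(\tau))|_X^2\ge 0$, so that $r(\tau)=e^{-\int_0^\tau\phi(s)\,ds}\,r(0)$. For (2), I first note $\mathcal{B}_X^{3R}(P_mu^*)\subset\mathcal{B}_X^\rho(0)$ because $|P_mu^*|_X\le|u^*|_{H^2}\le R$ by $\eqref{Rkdv1}$ and $\rho=4R$. Since the exponential factor lies in $(0,1]$, we have $|r(\tau)|_X\le|r(0)|_X<3R$ for all $\tau\ge0$, which is forward invariance; as $F$ is bounded on this invariant ball, the solution cannot escape in finite $\tau$, giving global existence forward in time, with uniqueness from the Lipschitz property.

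Assertion (3) is where the main subtlety lies, and the frozen-direction structure resolves it cleanly: the representation above shows $r(\tau)$ is always a nonnegative scalar multiple of $r(0)$, and the scalar $e^{-\int_0^\tau\phi}$ decreases monotonically to some $c_\infty\in[0,1]$, so $v(\tau)\to v_\infty:=P_mu^*+c_\infty r(0)$ in $X$. If $c_\infty=0$ then $v_\infty=P_mu^*$, which is a steady state since $F(P_mu^*)=0$. If $c_\infty>0$ then $\int_0^\infty\phi(s)\,ds<\infty$; because $\phi$ is continuous in $\tau$ (as $v(\tau)$ is continuous and $\Phi$ is Lipschitz) and $v(\tau)\to v_\infty$, continuity forces $\phi(\tau)\to\Phi(v_\infty)$, and finiteness of the integral forces this limit to vanish. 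Hence $\Phi(v_\infty)=0$, so $F(v_\infty)=0$ and $v_\infty$ is a steady state. The hard part is precisely this passage from convergence of a scalar factor to convergence of the full trajectory; it works here only because the direction of $v-P_mu^*$ never rotates.

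For (4), a steady state $v^*\in\mathcal{B}_X^\rho(0)$ satisfies $-\Phi(v^*)(v^*-P_mu^*)=0$, so either $v^*=P_mW(v^*)$ or $v^*=P_mu^*$. Since $u^*$ is a steady state of $\eqref{KdV}$ it is a constant trajectory on $\mathcal{A}$, and Theorem \ref{DMW}(1) gives $P_mW(P_mu^*)=P_mu^*$, so the second alternative is subsumed in the first. In the case $v^*=P_mW(v^*)$, Theorem \ref{DMW}(2) shows $W(v^*)=:u(\cdot)$ is a trajectory in $\mathcal{A}$, whence $v^*(s)=P_mu(s)$ for all $s\in\mathbb{R}$. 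Conversely, for any $u(\cdot)\subset\mathcal{A}$ one has $|P_mu|_X\le R<\rho$ and, by Theorem \ref{DMW}(1), $P_mW(P_mu)=P_mu$, so $\Phi(P_mu)=0$ and $P_mu$ is a steady state. This gives the desired one-to-one identification between steady states of $\eqref{newformkdv}$ in $\mathcal{B}_X^\rho(0)$ and the projected trajectories of $\mathcal{A}$.
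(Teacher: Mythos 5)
Your proposal is correct and follows essentially the same route as the paper: the explicit representation $v(\tau)-P_mu^*=\theta(\tau)(v_0-P_mu^*)$ with $\theta(\tau)=e^{-\int_0^\tau|v-P_mW(v)|_X^2}$, monotone convergence of $\theta$ to $\bar\theta$, the dichotomy $\bar\theta=0$ versus $\bar\theta>0$ with finiteness of the integral forcing $\Phi(\bar v)=0$, and Theorems \ref{LP} and \ref{DMW} for the Lipschitz property and the identification of steady states. Your write-up merely fills in details the paper leaves terse (the product decomposition of the vector field in item (1) and the case analysis in item (4)), so no substantive difference to report.
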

\begin{proof}
    \begin{enumerate}
    	\item Since $P_mW(v)$ is a Lipschitz map in $\mathcal{B}_X^\rho(0)$, which is shown in the previous section, the vector field in the determining form is a Lipschitz map from the ball $\mathcal{B}_X^\rho(0)$ into $X$.
        	\item From the short time existence and uniqueness of determining form $\eqref{newformkdv}$, we find that
        		\begin{align}
        			v(\tau)-P_{m}u^{*}=\theta(\tau)(v_{0}-P_{m}u^{*}), \label{jj1}
        		\end{align}
        		where
        		\begin{align}
        			\theta(\tau)=e^{-\int_{0}^{\tau}|v(\sigma)-P_{m}W(v(\sigma))|_{X}^{2}d\sigma}, \label{jj2}
        		\end{align}
        		for small enough $\tau$. Since $\theta(\tau)$ is non-increasing and belongs to $[0, 1]$, then $v(\tau)\in \mathcal{B}_X^{3R}(P_mu^*)$, which implies the global existence for $\eqref{newformkdv}$. Moreover, $\eqref{jj1}$ implies that $v(\tau)\in \mathcal{B}_X^{3R}(P_mu^*)$, for all $\tau\geq 0$.
	\item Since $\theta(\tau)\in [0, 1]$, for all $\tau\geq 0$, and non-increasing, then $\theta(\tau)\rightarrow \bar{\theta}$, as $\tau \rightarrow \infty$.  On the one hand, if $\bar{\theta}=0$, from $\eqref{jj1}$, 		we obtain that
        		\begin{align*}
        			v(\tau)\rightarrow P_{m}u^{*},
        		\end{align*}
          	 as $\tau \rightarrow \infty$. Thus $v(\tau)$ is converging to steady state solution $P_{m}u^{*}$ of the determining form $\eqref{newformkdv}$, and $P_{m}u^{*}\in P_{m}\mathcal{A}$. On the other hand, if $\bar{\theta}\in (0,1]$, then
		\begin{align*}
			v(\tau)\rightarrow \bar{v}:=\bar{\theta}v_{0}+(1-\bar{\theta})P_{m}u^{*}.
		\end{align*}
		Thus,
		\begin{align}
			|v(\tau)-P_{m}W(v(\tau))|_{X}^{2}\rightarrow |\bar{v}-P_{m}W(\bar{v})|_{X}^{2}. \label{jj3}
		\end{align}
		Now since $\bar{\theta}>0$, from $\eqref{jj2}$, we have that
		\begin{align}
			\int_{0}^{\infty}|v(\tau)-P_{m}W(v(\tau))|_{X}^{2}<\infty.
		\end{align}
		Since the integrand is converging, $\eqref{jj3}$, then the limit of the integrant must be zero, i.e.,
		\begin{align*}
			|\bar{v}-P_{m}W(\bar{v})|_{X}^{2}=0.
		\end{align*}
		Therefore $v(\tau)$ converges, as $\tau \rightarrow \infty$, to $\bar{v}$ which is the steady state solution of the determining form.
	  \item Follows from the facts established in item (3) above and Theorem \ref{DMW}.
	\end{enumerate}
\end{proof}

\section{Continous data assimilation}
In this section we provide a continuous data assimilation algorithm for recovering a reference solution $u$ of $\eqref{KdV}$-$\eqref{boundary}$ from $P_{m}u(s)$, for $s\geq s_{0}$. In other words, this is a downscaling algorithm for recovering the fine scales $(I-P_{m})u$ from knowing only the coarse scales $P_{m}u$.
\newtheorem{wgoesu}[ANA]{Theorem}
\begin{wgoesu}
\label{CDA}
Let $\rho=4R$ with $R$ given in $\eqref{Rkdv1}$, and let $u(s)$ be the solution of the damped and driven KdV equation $\eqref{KdV}$-$\eqref{boundary}$, with $f\in \dot{H}_{\text{per}}^2$. Assume that $|P_{m}u(s)|_{H^{2}}<\rho$, for all $s\geq s_{0}$. Let $w(s)$ be the solution of the corresponding `continuous data assimilation' equation
	\begin{align}
		w_s+ww_{x}+w_{xxx}+\gamma w=f- \mu [P_mw-P_{m}u],  \label{originalDA}
	\end{align}
with an arbitrary initial data $w(s_{0})\in \dot{H}^{2}$, with $|w(s_{0})|_{H^{2}}<\rho$, and subject to periodic boundary conditions
	\begin{align*}
		\partial_{x}^{j}w(s, x)=\partial_{x}^{j}w(s, x+L), \qquad \forall (s, x)\in \mathbb{R}\times \mathbb{R}. 	
	\end{align*}
for $j=0,1$ and $2$.
Assume that $\mu$ and $m$ are large enough so that conditions $\eqref{condition1}$, $\eqref{condition2}$, and
	\begin{align}
		C_{3}^{0}\leq \mu,  \label{condition3'} 
	\end{align}
	\begin{align}
		\frac{C_{3}^{0}L^{2}}{4\pi^{2}(m+1)^{2}}\leq \frac{\gamma}{2m}, \label{condition5} 
	\end{align}
	\begin{align}
		\frac{1}{m}\left((\gamma+2\mu)\mathcal{R}_{\infty}+2\mathcal{R^{'}}^{4}\gamma^{-3}\right)\leq \frac{\gamma}{2}, \label{condition6} 
	\end{align}
hold. Then we have $|w(s)- u(s)|\rightarrow 0$, as $s\rightarrow \infty$, at an exponential rate with exponent $\frac{\gamma}{4}$.
\end{wgoesu}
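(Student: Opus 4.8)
The plan is to reduce to the difference $\delta := w-u$ and then, rather than sending the initial time to $-\infty$ as in the proof of Theorem~\ref{uniq1}, to run Gronwall \emph{forward} from $s_0$ on a weighted combination of two energy inequalities. First I would subtract $\eqref{KdV}$ from $\eqref{originalDA}$; writing $ww_x-uu_x=(\xi\delta)_x$ with $\xi=(w+u)/2$, exactly as in $\eqref{attractordelta0}$, this gives
\[
\delta_s+(\xi\delta)_x+\delta_{xxx}+\gamma\delta=-\mu P\delta .
\]
Because $|w(s_0)|_{H^2}<\rho$ and $|P_mu(s)|_{H^2}<\rho$ for $s\geq s_0$, the a priori estimates of Section~4 provide, for all $s\geq s_0$, the bounds $\mathcal{R}_0,\dots,\mathcal{R}_\infty,\mathcal{R'}$ on $w$ (and the corresponding $\mu=0$ bounds on the reference solution $u$), hence uniform $H^1$, $H^2$ and $L^\infty$ control of $\xi$, which is all I need to make every constant below finite.

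Next I would produce the two differential inequalities. Testing the difference equation against $2\delta$ and invoking the computation $\eqref{eqn66}$ yields the $L^2$ inequality
\[
\frac{d}{ds}|\delta|^2+2\gamma|\delta|^2+2\mu|P\delta|^2\leq C_3^0|\delta|^2 ;
\]
splitting $C_3^0|\delta|^2=C_3^0|P\delta|^2+C_3^0|Q\delta|^2$, absorbing the projected part into $2\mu|P\delta|^2$ via $\eqref{condition3'}$, and bounding $|Q\delta|^2$ by Poincar\'e gives
\[
\frac{d}{ds}|\delta|^2+2\gamma|\delta|^2\leq \frac{C_3^0L^2}{4\pi^2(m+1)^2}\,|\delta_x|^2 .
\]
For the second inequality I would repeat the Fourier-coefficient argument from the proof of Theorem~\ref{uniq1} (including the Weierstrass justification of $\eqref{chder}$) to obtain the evolution of $\Psi(\delta)=\int(\delta_x^2-\xi\delta^2)$ and then estimate its right-hand side to reach
\[
\frac{d}{ds}\Psi+\gamma\Psi\leq\left[(\gamma+2\mu)\mathcal{R}_\infty+2\mathcal{R'}^{4}\gamma^{-3}\right]|\delta|^2 ,
\]
together with the coercivity bounds $|\delta_x|^2\leq\Psi+\mathcal{R}_\infty|\delta|^2$ and $\Psi\geq-\mathcal{R}_\infty|\delta|^2$ coming from $|\int\xi\delta^2|\leq\mathcal{R}_\infty|\delta|^2$.

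The heart of the argument is to fuse these into a single decaying quantity. I would set $E(s):=|\delta(s)|^2+\tfrac{1}{m}\Psi(\delta(s))$ and add $\tfrac1m$ times the $\Psi$-inequality to the $L^2$-inequality. Substituting $|\delta_x|^2\leq\Psi+\mathcal{R}_\infty|\delta|^2$, the $\Psi$-feedback is tamed by $\eqref{condition5}$ (which makes $\tfrac{C_3^0L^2}{4\pi^2(m+1)^2}\leq\tfrac{\gamma}{2m}$) and the $|\delta|^2$-forcing by $\eqref{condition6}$ (which makes $\tfrac1m[(\gamma+2\mu)\mathcal{R}_\infty+2\mathcal{R'}^{4}\gamma^{-3}]\leq\tfrac{\gamma}{2}$), so that for $m$ large one collapses everything to
\[
\frac{d}{ds}E+\frac{\gamma}{2}E\leq 0 .
\]
Gronwall on $[s_0,s]$ then gives $E(s)\leq E(s_0)e^{-\gamma(s-s_0)/2}$, and since $E\geq(1-\mathcal{R}_\infty/m)|\delta|^2\geq\tfrac12|\delta|^2$ for $m$ large, I conclude $|\delta(s)|\leq\sqrt{2E(s_0)}\,e^{-\gamma(s-s_0)/4}$, i.e.\ $|w(s)-u(s)|\to0$ at the exponential rate $\gamma/4$.

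The main obstacle, beyond the lengthy but routine re-derivation of the $\Psi$-inequality, is the algebraic calibration of the weight $\tfrac1m$ against the two smallness hypotheses: one must check \emph{simultaneously} that the $\Psi$-weight in $E$ is large enough to absorb the $|\delta_x|^2$ feedback generated by the $L^2$-inequality (forcing the weight $\gtrsim C_3^0L^2/(4\pi^2(m+1)^2\gamma)\sim 1/m$, whence $\eqref{condition5}$), yet small enough that the $|\delta|^2$-forcing of $\Psi$ stays below $\tfrac{\gamma}{2}$ (whence $\eqref{condition6}$), while keeping the combined functional coercive, $E\gtrsim|\delta|^2$. It is precisely the compatibility of these competing demands that pins down the $1/m$ scaling in $\eqref{condition5}$–$\eqref{condition6}$ and delivers the clean decay exponent $\gamma/4$.
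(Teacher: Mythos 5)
Your proposal is correct, and it coincides with the paper's proof up through the derivation of the two differential inequalities (the $L^2$ inequality \eqref{jj} and the $\Psi$-inequality \eqref{ekle1}) and their combination with weight $\tfrac1m$ in \eqref{j2}; where it genuinely departs is in how that combined inequality is closed. The paper does \emph{not} extract a single uniform bound $\frac{d}{ds}E+\frac{\gamma}{2}E\le 0$: it instead splits into three cases according to the sign of $\Psi(\delta(s))$ --- $\Psi\le 0$ on a terminal interval (where \eqref{jj} alone yields decay), $\Psi\ge 0$ on a terminal interval (where $\Psi\ge0$ gives $|\delta|^2\le E$ directly), and an alternating case handled by a gluing lemma that matches the decay across the zeros of $\Psi$. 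Your route replaces all of this with the observation that $E=|\delta|^2+\tfrac1m\Psi$ is coercive \emph{regardless} of the sign of $\Psi$, since $\Psi\ge-\mathcal{R}_\infty|\delta|^2$ and \eqref{condition6} forces $2\mathcal{R}_\infty\le m$, hence $E\ge\tfrac12|\delta|^2$. This is a genuine simplification: it eliminates the third case entirely (whose reduction to a discrete alternation of sign intervals is asserted in the paper only ``without loss of generality,'' even though the zero set of a continuous function need not be so tame), and it still delivers the exponent $\gamma/4$. The one step you should make explicit is the sign-independence of the differential inequality itself: when $\Psi<0$ the term $\frac{C_3^0L^2}{4\pi^2(m+1)^2}\Psi$ on the right of \eqref{j2} is not ``absorbed'' into $\frac{\gamma}{m}\Psi$ on the left (both are then negative); rather one checks that the net coefficient $\frac{C_3^0L^2}{4\pi^2(m+1)^2}-\frac{\gamma}{2m}$ is nonpositive by \eqref{condition5}, and then bounds the resulting nonnegative contribution by $\frac{\gamma}{2m}(-\Psi)\le\frac{\gamma\mathcal{R}_\infty}{2m}|\delta|^2\le\frac{\gamma}{4}|\delta|^2$, which is dominated by the surplus left in the $2\gamma|\delta|^2$ term after \eqref{condition5} and \eqref{condition6} have been applied. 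With that one-line verification your single forward Gronwall argument is complete and yields $|\delta(s)|\le\sqrt{2E(s_0)}\,e^{-\gamma(s-s_0)/4}$ as claimed.
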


\begin{proof} We first note that one can prove the existence and the uniqueness of the solution for the initial value problem $\eqref{originalDA}$ subject to boundary conditions by slight adjustment of the proof given for KdV equation in \cite{Temam1}. In particular, one needs to regularize the equation with $\epsilon w_{xxxx}$ as in section 3, and look at the regularized equation with $C^{\infty}$ initial data $w^{\epsilon}(s_{0})$, where $w^{\epsilon}(s_{0})\rightarrow w(s_{0})$ in $H^{2}$, as $\epsilon \rightarrow 0$. Then one can use results in \cite{LionsPar}, and find uniform in $\epsilon$ estimates as in \cite{Temam1} (or as in section 4) to prove the desired result. Thus we can assume that both of the above equations $\eqref{KdV}$ and $\eqref{originalDA}$ have global solution in $L^{\infty}([s_{0}, \infty),\dot{H}^{2})$, and the equations hold in $L^{\infty}((s_{0}, \infty), \dot{H}^{-1})$. We also use the same bounding notation such as $\mathcal{R}_{0}$, $\mathcal{R}_{1}$, $\mathcal{R}_{2}$, $\mathcal{R}_{\infty}$ and $\mathcal{R}^{'}$ for $L^{2}$, $H^{1}$, $H^{2}$, $L^{\infty}$ and time derivative bounds for the global solutions of these equations where $s\geq s_{0}$, not on whole line $\mathbb{R}$. We note that since $|P_{m}u(s)|<\rho$, for all $s\geq s_{0}$, and $|w(s_{0})|_{H^{2}}<\rho$, the bounding expressions mentioned above depend on $\rho$. We proceed as in Theorem \ref{uniq1}, but take $P_{m}u$ instead of $v$ and $\delta=w-u$, to obtain
	\begin{align} \label{ekle1}
		\frac{d}{ds}\Psi+\gamma \Psi\leq \left[ (\gamma+2\mu)\mathcal{R}_{\infty}+2\mathcal{R^{'}}^{4}\gamma^{-3}\right]|\delta(s)|^{2}, 
 	\end{align}	
where again
	\begin{align}
		\Psi(\delta)=\int (\delta_{x}^{2}-\xi\delta^{2}). \label{F3'} 
 	\end{align}
As in the proof of Theorem \ref{uniq1}, we have
	\begin{align*}
		\frac{d}{ds}|\delta|^{2}+2\gamma |\delta|^{2}+2\mu |P\delta|^{2}=-\int 2(\xi \delta)_{x}\delta,
	\end{align*}
as well as in $\eqref{eqn66}$
	\begin{align*}
		-\int 2(\xi \delta)_{x}\delta \leq C_{3}^{0}|\delta|^{2},
	\end{align*}					
where 	
	\begin{align}
		C_{3}^{0}:=  (\mathcal{R}_{1}^{0})^{\frac{1}{2}}(\mathcal{R}_{2}^{0})^{\frac{1}{2}}. \label{c30}
	\end{align}
We note that, contrary to previous sections where $C_{3}$ depends on $\mu$ and $\rho$, here $C_{3}^{0}$ depends only on $\rho$, but not on $\mu$. Here we have, 	
	\begin{align*}						
		-\int 2(\xi \delta)_{x}\delta&\leq C_{3}^{0}|P\delta|^{2}+C_{3}^{0}|Q\delta|^{2}\leq C_{3}^{0}|P\delta|^{2}+ \frac{C_{3}^{0}L^{2}}{4\pi^{2}(m+1)^{2}}|\delta_{x}|^{2}.
         \end{align*}
If we choose $\mu$ large enough such that $\eqref{condition3'}$ hold, then
	\begin{align}
		\frac{d}{ds}|\delta|^{2}+2\gamma |\delta|^{2}\leq  \frac{C_{3}^{0}L^{2}}{4\pi^{2}(m+1)^{2}}|\delta_{x}|^{2}. \label{jj}
	\end{align}
Since
	\begin{align*}
		\Psi(\delta)\geq |\delta_{x}|^{2}-|\xi|_{\infty}|\delta|^{2}\geq |\delta_{x}|^{2}-\mathcal{R}_{\infty}|\delta|^{2},		
	\end{align*}
we have that
	\begin{align} \label{eqn6}
		|\delta_{x}|^{2}\leq \Psi(\delta)+\mathcal{R}_{\infty}|\delta|^{2}.
	\end{align}	
Thus,
	\begin{align}
		\frac{d}{ds}|\delta|^{2}+2\gamma |\delta|^{2}\leq  \frac{C_{3}L^{2}}{4\pi^{2}(m+1)^{2}}\Psi(\delta)+ \frac{C_{3}L^{2}\mathcal{R}_{\infty}}{4\pi^{2}(m+1)^{2}}|\delta|^{2}. \label{ekle2}
	\end{align}
We add $\frac{1}{m}\times\eqref{ekle1}$ and $\eqref{ekle2}$, to get
	\begin{align}
		\frac{d}{ds}\left(|\delta|^{2}+\frac{1}{m}\Psi\right)+\gamma \left(2|\delta|^{2}+\frac{1}{m}\Psi\right)\leq&  \frac{C_{3}L^{2}}{4\pi^{2}(m+1)^{2}}\Psi(\delta)+ \frac{C_{3}L^{2}\mathcal{R}_{\infty}}{4\pi^{2}(m+1)^{2}}|\delta|^{2}\notag\\
		&+\frac{1}{m}\left[ (\gamma+2\mu)\mathcal{R}_{\infty}+2\mathcal{R^{'}}^{4}\gamma^{-3}\right]|\delta|^{2}. \label{j2}
	\end{align}
We note that the functional $\Psi(\delta(\cdot)): [s_{0}, \infty) \rightarrow \mathbb{R}$, defined in $\eqref{F3'}$, is an absolutely continuous map with respect to $s$ on any subinterval of $[s_{0}, \infty)$. Now we look at different cases depending on the sign of $\Psi(\delta(s))$ for different values of $s$. \\\\
$\bf{CASE}$ $\bf{1}:$ $\Psi(\delta(s))\leq 0$ on the interval $[s^{*}, \infty)$. Then, by $\eqref{eqn6}$, we have
	\begin{align*}
		|\delta_{x}(s)|^{2}\leq \mathcal{R}_{\infty}|\delta(s)|^{2}, \text{ on the interval } [s^{*}, \infty).
	\end{align*}
From $\eqref{jj}$,
	\begin{align*}
		\frac{d}{ds}|\delta(s)|^{2}+2\gamma |\delta(s)|^{2}\leq  \frac{C_{3}L^{2} \mathcal{R}_{\infty}}{4\pi^{2}(m+1)^{2}}|\delta(s)|^{2}, \text{ on the interval } [s^{*}, \infty).
	\end{align*}
$\eqref{condition5}$ and $\eqref{condition6}$ imply that
	\begin{align*}
		\frac{C_{3}L^{2} \mathcal{R}_{\infty}}{4\pi^{2}(m+1)^{2}}\leq \gamma,
	\end{align*}
then by Gronwall's inequality,
	\begin{align*}
		|\delta(s)|^{2}\leq  e^{-\gamma(s-s^{*})}|\delta(s^{*})|^{2}, \text{ for every } s\in [s^{*}, \infty).
	\end{align*}
Thus $|\delta(s)|\rightarrow 0$, as $s\rightarrow \infty$, at an exponential rate with exponent $\frac{\gamma}{2}$.  \\\\
$\bf{CASE}$ $\bf{2}:$ $\Psi(\delta(s))\geq 0$ on the interval $[s^{*}, \infty)$. Then since $\Psi(\delta(s))\geq 0$, if we choose $m$ large enough such that $\eqref{condition5}$ and $\eqref{condition6}$ hold, then from $\eqref{j2}$, we obtain
	\begin{align*}
		\frac{d}{ds}\left(|\delta|^{2}+\frac{1}{m}\Psi\right)+\gamma \left(2|\delta|^{2}+\frac{1}{m}\Psi\right)\leq \frac{\gamma}{2m}\Psi+ \left(\frac{\gamma \mathcal{R}_{\infty}}{2m}+\gamma \right)|\delta|^{2}.
	\end{align*}
$\eqref{condition6}$ implies that $2\mathcal{R}_{\infty}\leq m$. Thus, we have
	\begin{align*}
		\frac{d}{ds}\left(|\delta|^{2}+\frac{1}{m}\Psi\right)+\frac{\gamma}{2} \left(|\delta|^{2}+\frac{1}{m}\Psi\right)\leq 0.
	\end{align*}
By Gronwall's inequality,
	\begin{align}
		|\delta(s)|^{2}+\frac{1}{m}\Psi(\delta(s))\leq \left(|\delta(s^{*})|^{2}+\frac{1}{m}\Psi(s^{*})\right)e^{-\frac{\gamma}{2}(s-s^{*})}, \text{ for all } s\in [s^{*}, \infty).\label{onemli1}
	\end{align}
Since $\Psi(\delta(s))\geq 0$ on the interval $[s^{*}, \infty)$, we have
	\begin{align*}
		|\delta(s)|^{2}\leq \left(|\delta(s^{*})|^{2}+\frac{1}{m}\Psi(s^{*})\right)e^{-\frac{\gamma}{2}(s-s^{*})}, \text{ for all } s\in [s^{*}, \infty).
	\end{align*}
We note that $|\delta(s^{*})|^{2}+\frac{1}{m}\Psi(s^{*})$ is bounded. Thus $|\delta(s)|\rightarrow 0$, as $s\rightarrow \infty$, at an exponential rate with exponent $\frac{\gamma}{4}$.\\\\
$\bf{CASE}$ $\bf{3}:$ Sign of $\Psi(\delta(s))$ alternates between subintervals on the interval $[s_{0}, \infty)$. Without loss of generality, we assume that there exists a sequence $\{s_{n}\}_{n\in \mathbb{N}}\subset [s_{0}, \infty)$ such that $\Psi(\delta(s))\geq 0$ on intervals $I_{2k+1}$ for $k\in {0, 1, 2, 3, ...}$, where $I_{2k+1}:=[s_{2k}, s_{2k+1}]$, $\Psi(\delta(s))\leq 0$ on intervals $I_{2k+2}$ for $k\in {0, 1, 2, 3, ...}$, where $I_{2k+2}:=[s_{2k+1}, s_{2k+2}]$, and $\Psi(\delta(s_{k}))=0$ for any nonnegative integer $k$. We note that $\cup_{n\in \mathbb{N}} I_{n}=[s_{0}, \infty)$. We prove the following lemma.

\newtheorem*{lemma}{Lemma}
	\begin{lemma}
	\label{lm2}
		$(a)$ For any nonzero $n\in \mathbb{N}$, $|\delta(s_{n})|^{2}\leq |\delta(s_{n-1})|^{2}e^{-\frac{\gamma}{2}(s_{n}-s_{n-1})}$. \\
		$(b)$ For any nonzero $n\in \mathbb{N}$, $|\delta(s_{n})|^{2}\leq |\delta(s_{0})|^{2}e^{-\frac{\gamma}{2}(s_{n}-s_{0})}$.\\
		$(c)$ For any $s\in (s_{0}, \infty)$, $|\delta(s)|^{2}\leq |\delta(s_{0})|^{2}e^{-\frac{\gamma}{2}(s-s_{0})}$.

	\end{lemma}
\begin{proof} [Proof of the Lemma]
$(a)$ When $n$ is an even integer, then $\Psi(\delta(s))\leq 0$ on the interval $I_{n}$. Thus, lemma follows from the first case above where we take the interval $I_{n}$ instead of $[s_{0}, \infty)$. When $n$ is an odd integer, then $\Psi(\delta(s))\geq 0$ on the interval $I_{n}$. Thus from the second case above, where we take  the interval $I_{n}$ instead of $[s_{0}, \infty)$, we obtain that
	\begin{align*}
		|\delta(s_{n})|^{2}\leq \left(|\delta(s_{n-1})|^{2}+\frac{1}{m}\Psi(s_{n-1})\right)e^{-\frac{\gamma}{2}(s_{n}-s_{n-1})}.
	\end{align*}
Since $\Psi(s_{n-1})=0$, Part $(a)$ follows. \\\\
$(b)$ This follows by mathematical induction and Part $(a)$. \\\\
$(c)$ For any $s\in (s_{0}, \infty)$, there exists an integer $n$ such that, $s\in I_{n+1}=[s_{n}, s_{n+1})$. By using the idea of the proof of Part (a), we obtain that
	\begin{align*}
		|\delta(s)|^{2}\leq |\delta(s_{n})|^{2}e^{-\frac{\gamma}{2}(s-s_{n})}.
	\end{align*}
Now by Part $(b)$,
	\begin{align*}
		|\delta(s)|^{2}&\leq |\delta(s_{0})|^{2}e^{-\frac{\gamma}{2}(s_{n}-s_{0})}e^{-\frac{\gamma}{2}(s-s_{n})}\\
				     &= |\delta(s_{0})|^{2}e^{-\frac{\gamma}{2}(s-s_{0})}.
	\end{align*}
This completes the proof of Lemma \ref{lm2}. 	
\end{proof}
Part $(c)$ shows that, $|\delta(s)|\rightarrow 0$, as $s\rightarrow \infty$, at an exponential rate with exponent $\frac{\gamma}{4}$.	
\end{proof}

\newtheorem{rky13}[rk1]{Remark}
	\begin{rky13}
		 The same technique where we combine two differential inequalities can be applied to give a data assimilation algorithm for damped and driven, nonlinear Schr\"odinger equations.			
	\end{rky13}

\section{Determining modes}
	\newtheorem{modeskdv}[ANA]{Theorem}
		\begin{modeskdv}
		\label{DM}
			Let $\rho=4R$, with $R$ is given in $\eqref{Rkdv1}$. Assume that $m$ is large enough such that
			\begin{align}			
			\frac{L^{2}}{4\pi^{2}(m+1)^{2}}\frac{1}{\gamma}\left[ 2\gamma \mathcal{R}_{\infty}^{0}+2(\mathcal{R^{'}}^{0})^{4}\gamma^{-3}\right]\leq \frac{1}{2}, \label{condition4'} 
			\end{align}
	which is the modified version of the condition $\eqref{condition4}$. Here $\mathcal{R}_{\infty}^{0}=\mathcal{R}_{\infty}|_{\mu=0}$ and $(\mathcal{R}^{'})^{0}=\mathcal{R}^{'}|_{\mu=0}$. Then the Fourier projection $P_m$ of $L^2$ onto the space $H_m$, where $H_m$ is defined in $\eqref{hm}$, is determining for $\eqref{KdV}$, i.e., if $u(\cdot), \bar{u}(\cdot)\subset \mathcal{A}$, the global attractor of $\eqref{KdV}$ with $P_mu(s)=P_m\bar{u}(s), \text{ } \text{for all} \text{ } s\in \mathbb{R}$, then $u(s)=\bar{u}(s),  \text{ } \text{for all} \text{ } s\in \mathbb{R}$.
		\end{modeskdv}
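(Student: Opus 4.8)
The plan is to reduce the statement to the `reverse' Poincar\'e argument already developed in the proof of Theorem \ref{uniq1}, specialized to the case $\mu=0$. Suppose $u(\cdot),\bar u(\cdot)\subset\mathcal{A}$ satisfy $P_mu(s)=P_m\bar u(s)$ for all $s\in\mathbb{R}$, and set $\delta:=u-\bar u$. The hypothesis then reads $P_m\delta(s)=0$, i.e.\ $\delta=Q\delta$ with $Q=I-P_m$, so that $\delta$ carries only the modes $|k|>m$ for every $s$. Subtracting the two copies of \eqref{KdV} and writing $uu_x-\bar u\bar u_x=(\xi\delta)_x$ with $\xi=\frac{u+\bar u}{2}$, I obtain
\begin{align*}
\delta_s+(\xi\delta)_x+\delta_{xxx}+\gamma\delta=0,
\end{align*}
which is exactly equation \eqref{attractordelta0} with the feedback term set to $\mu=0$. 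Since $u,\bar u$ lie on $\mathcal{A}$, all the a priori bounds of Section 4 hold with their $\mu=0$ values; in particular $|\xi|_\infty\le\mathcal{R}_\infty^0$ and $|\xi_s|_{H^{-1}}\le\mathcal{R^{'}}^0$.

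Next I would run the Fourier-coefficient computation of Theorem \ref{uniq1} verbatim with $\mu=0$. Passing \eqref{attractordelta0} to the $k$-th coefficient, multiplying by $k^2\bar\delta_k$, summing over $k$ (the termwise differentiation \eqref{chder} being justified by the same $\ell^1$-majorant argument, now using the $H^2$ bounds on $\mathcal{A}$), and combining with the $\dot H^{-2}$ action on $-\xi\delta$ yields the energy identity for $\Psi(\delta)=\int(\delta_x^2-\xi\delta^2)$ from \eqref{F3}, now without the two $\mu$-terms:
\begin{align*}
\frac{d}{ds}\Psi+\gamma\Psi=-\gamma\int\delta_x^2+\gamma\int\xi\delta^2-\langle\xi_s,\delta^2\rangle.
\end{align*}
The same three estimates as before give $\frac{d}{ds}\Psi+\gamma\Psi\le\left[\gamma\mathcal{R}_\infty^0+2(\mathcal{R^{'}}^0)^4\gamma^{-3}\right]|\delta|^2$, with the term $-\gamma|\delta_x|^2$ absorbing the $\gamma|\delta_x|^2$ produced by the bound on $\langle\xi_s,\delta^2\rangle$. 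Applying Gronwall on $[s_0,s]$, using that $\Psi(\delta(s_0))$ is uniformly bounded on $\mathbb{R}$ (again by the attractor bounds), letting $s_0\to-\infty$, and inserting $|\delta_x|^2\le\Psi+\mathcal{R}_\infty^0|\delta|^2$ as in \eqref{eqn4} produces the reverse Poincar\'e inequality
\begin{align*}
|\delta_x(s)|^2\le\frac{1}{\gamma}\left[2\gamma\mathcal{R}_\infty^0+2(\mathcal{R^{'}}^0)^4\gamma^{-3}\right]\sup_{s\in\mathbb{R}}|\delta(s)|^2,\qquad s\in\mathbb{R},
\end{align*}
which is precisely the $\mu=0$ analog of \eqref{revP}.

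Finally I would play this against the \emph{forward} Poincar\'e inequality, available for free here because $\delta=Q\delta$ has no modes with $|k|\le m$: one has $|\delta(s)|^2\le\frac{L^2}{4\pi^2(m+1)^2}|\delta_x(s)|^2$. Combining the two and taking the supremum over $s$ gives
\begin{align*}
\sup_{s\in\mathbb{R}}|\delta(s)|^2\le\frac{L^2}{4\pi^2(m+1)^2}\frac{1}{\gamma}\left[2\gamma\mathcal{R}_\infty^0+2(\mathcal{R^{'}}^0)^4\gamma^{-3}\right]\sup_{s\in\mathbb{R}}|\delta(s)|^2,
\end{align*}
and condition \eqref{condition4'} forces the prefactor to be at most $\tfrac12$, whence $\sup_s|\delta(s)|^2\le\tfrac12\sup_s|\delta(s)|^2$. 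As this supremum is finite, it must vanish, so $\delta\equiv0$ and $u=\bar u$. Note that, unlike in Theorem \ref{uniq1}, there is no need for the separate $L^2$ estimate obtained by testing against $2\delta$: that step existed only to trade the feedback damping $\mu|P\delta|^2$ against the $C_3|P\delta|^2$ term, and with $\mu=0$ and $P\delta=0$ it is vacuous. I expect the only delicate point to be the same one as in Theorem \ref{uniq1}, namely the rigorous justification of the termwise differentiation \eqref{chder} and of the $s_0\to-\infty$ limit, both of which rest on the uniform $H^2$ and time-derivative bounds valid on $\mathcal{A}$.
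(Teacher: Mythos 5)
Your proposal is correct and follows essentially the same route as the paper: derive the $\mu=0$ reverse Poincar\'e inequality $|\delta_x(s)|^2\le\frac{1}{\gamma}\left[2\gamma\mathcal{R}_\infty^0+2(\mathcal{R^{'}}^0)^4\gamma^{-3}\right]\sup_s|\delta(s)|^2$ by repeating the $\Psi$-energy argument of Theorem \ref{uniq1}, then use $P_m\delta=0$ and the forward Poincar\'e inequality on $Q\delta$ together with \eqref{condition4'} to conclude $\sup_s|\delta|^2\le\frac12\sup_s|\delta|^2$. Your observation that the separate $L^2$ estimate against $2\delta$ is unnecessary here is also consistent with the paper, which omits it.
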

	\begin{proof}
		We assume $u(s)$ and $\bar{u}(s)$ are trajectories on the global attractor, $\mathcal{A}$, of $\eqref{KdV}$-$\eqref{boundary}$, and  $P_m(u(s))=P_m(\bar{u}(s))$ for all time $s\in \mathbb{R}$, where $m\in \mathbb{N}$ is such that $\eqref{condition4'}$ is satisfied. We have the following equations for $u$ and $\bar{u}$,
			$$u_s+uu_{x}+u_{xxx}+\gamma u=f,$$
			$$\bar{u}_s+\bar{u}\bar{u}_{x}+\bar{u}_{xxx}+\gamma \bar{u}=f.$$
		Subtract, denoting $\delta:=u-\bar{u}$, to obtain
		\begin{align*}
			\delta_s+(\xi\delta)_{x}+\delta_{xxx}+\gamma \delta= 0,
		\end{align*}
		where $\xi=\frac{u+\bar{u}}{2}$.	
We proceed as in the proof of the Theorem \ref{uniq1}, to get a `reverse' Poincar\'e type inequality
	\begin{align*}
		|\delta_{x}(s)|^{2}\leq \frac{1}{\gamma}\left[ 2\gamma(\mathcal{R}_{\infty})^{0}+2(\mathcal{R^{'}}^{0})^{4}\gamma^{-3}\right]\sup_{s\in \mathbb{R}}|\delta(s)|^{2}.
	\end{align*}
Since $P\delta=0$,
	\begin{align*}
		|\delta(s)|^{2}= |Q\delta(s)|^{2}&\leq \frac{L^{2}}{4\pi^{2}(m+1)^{2}}|\delta_{x}(s)|^{2}\\
						       &\leq \frac{L^{2}}{4\pi^{2}(m+1)^{2}}\frac{1}{\gamma}\left[ 2\gamma(\mathcal{R}_{\infty})^{0}+2(\mathcal{R^{'}}^{0})^{4}\gamma^{-3}\right]\sup_{s\in \mathbb{R}}|\delta(s)|^{2}\\
						       &\leq \frac{1}{2}\sup_{s\in \mathbb{R}}|\delta(s)|^{2}
	\end{align*}
due to the condition $\eqref{condition4'}$. Thus we obtain that $|\delta(s)|=0$ for all $s\in \mathbb{R}$, i.e.  $u(s)=\bar{u}(s)$, for all $s\in \mathbb{R}$. 
	\end{proof}

\newtheorem{rky1}[rk1]{Remark}
	\begin{rky1}
		 $\eqref{condition4'}$ implies that the number of determining modes $m=O(\gamma^{-\frac{26}{3}}, |f|_{H^{2}}^{\frac{14}{3}})$, as $\gamma \rightarrow 0$ and $|f|_{H^{2}}\rightarrow \infty$.\\
	\end{rky1}

\appendix

\section{Steady State solution}
We show the existence of a steady state solution of $\eqref{KdV}$-$\eqref{boundary}$, namely a solution of
	\begin{align}
		uu_{x}+u_{xxx}+\gamma u=f, \label{KdVSS}
	\end{align}
subject to periodic boundary conditions with basic periodic interval $[0, L]$. Here $\int f=0$, thus the solution also must satisfy the compatibility condition $\int u=0$. If $f=0$, then $u=0$ is a solution. Take any nonzero $f$. First, we show that $\eqref{KdVSS}$ has a weak solution, in the sense of distributions. To this end, we consider the Galerkin approximation of $\eqref{KdVSS}$ in $H_{n}\cong \mathbb{R}^{2n}$. The Galerkin approximation system is
	\begin{align}
		P_{n}(u_{n}(u_{n})_{x})+(u_{n})_{xxx}+\gamma u_{n}=P_{n}f, \label{gal}
	\end{align}
where $u_{n}\in H_{n}$. Our first goal is to show that $\eqref{gal}$ has a solution. To begin, we establish some a-priori estimates for $\eqref{gal}$. Suppose $u_{n}$ is a solution of $\eqref{gal}$. Take the inner product of $\eqref{gal}$ with $u_{n}$, and integrate by parts to obtain
	\begin{align*}
		\gamma |u_{n}|^{2}= (f, u_{n})\leq |f||u_{n}|\leq \frac{|f|^{2}}{2\gamma}+\frac{\gamma |u_{n}|^{2}}{2}.
	\end{align*}
Thus,
	\begin{align*}
		\frac{\gamma}{2} |u_{n}|^{2}\leq \frac{|f|^{2}}{2\gamma}.
	\end{align*}
From here, we conclude the following uniform in $n$ estimate for all solutions of $\eqref{gal}$
	\begin{align*}
		|u_{n}|\leq \frac{|f|}{\gamma}:=R_{0},
	\end{align*}
provided such solutions exist. We will use the following proposition that is proven in \cite{ConFoias}. 	
	\newtheorem*{CF*}{Propostion}
		\begin{CF*}
			Let $B(0, R)\subset \mathbb{R}^{n}$ be a ball of radius $R$. Suppose $\Phi: \bar{B}(0, R)\rightarrow \mathbb{R}^{n}$ is a continuous map such that $$(\Phi(v),v) <0 \text{ for every } v\in \partial B(0, R).$$
			Then there exists $v^{*}\in \bar{B}(0, R)$ such that
			$$\Phi(v^{*})=0.$$
		\end{CF*}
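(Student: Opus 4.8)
The plan is to prove this by contradiction, reducing the existence of a zero to an application of the Brouwer fixed point theorem. First I would record a trivial but useful observation: the boundary hypothesis $(\Phi(v),v)<0$ already forces $\Phi(v)\neq 0$ for every $v\in\partial B(0,R)$, so the conclusion can fail only if $\Phi$ is nonvanishing on all of $\bar B(0,R)$. This is precisely the assumption I would negate and then contradict.

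Assuming $\Phi(v)\neq 0$ throughout $\bar B(0,R)$, the key step is to build the continuous self-map
$$ g(v) = R\,\frac{\Phi(v)}{|\Phi(v)|}, \qquad v\in\bar B(0,R). $$
Since $\Phi$ is continuous and never zero, $g$ is well defined and continuous, and manifestly $|g(v)|=R$; thus $g$ maps the compact convex ball $\bar B(0,R)$ continuously into itself, with image lying on the boundary sphere. Brouwer's fixed point theorem then supplies a point $v^*$ with $g(v^*)=v^*$, and from $|g(v^*)|=R$ we get $|v^*|=R$, i.e.\ $v^*\in\partial B(0,R)$.

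The contradiction comes from pairing the fixed-point identity with $v^*$. From $v^*=R\,\Phi(v^*)/|\Phi(v^*)|$ one computes
$$ (\Phi(v^*),v^*) = \frac{R}{|\Phi(v^*)|}\,(\Phi(v^*),\Phi(v^*)) = R\,|\Phi(v^*)| > 0, $$
which directly contradicts $(\Phi(v^*),v^*)<0$, valid on $\partial B(0,R)$. Hence the standing assumption is untenable and $\Phi$ must vanish at some $v^*\in\bar B(0,R)$, which is the assertion.

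The step requiring the most care is the construction of $g$ with the correct sign: taking $g(v)=R\,\Phi(v)/|\Phi(v)|$ rather than its negative is exactly what turns the fixed-point relation into the strictly positive inner product $R|\Phi(v^*)|$ that clashes with the strict boundary inequality (the negative sign would instead produce a consistent, non-contradictory identity). Beyond this, the genuine mathematical content is entirely carried by Brouwer's theorem; an equivalent route would replace it with a degree argument, using the homotopy $H(t,v)=(1-t)\Phi(v)-tv$ joining $\Phi$ to $-\mathrm{id}$ and checking $H(t,\cdot)\neq 0$ on $\partial B(0,R)$ (again by pairing with $v$), so that $\deg(\Phi,B,0)=\deg(-\mathrm{id},B,0)=(-1)^n\neq 0$ yields a zero. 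I would present the Brouwer version as the main line, since it is the most self-contained.
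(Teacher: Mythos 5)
Your argument is correct and is exactly the classical proof of this proposition: the paper itself does not prove it but cites Constantin--Foias, where the argument is the same contradiction via Brouwer's fixed point theorem applied to the normalized map $v\mapsto R\,\Phi(v)/|\Phi(v)|$ (with the sign chosen, as you note, so that the fixed point on the sphere yields $(\Phi(v^*),v^*)=R|\Phi(v^*)|>0$ against the boundary hypothesis). Nothing is missing.
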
	
We apply the above proposition to show that $\eqref{gal}$ has a solution. Let $\Phi$ be the map defined on a closed ball in $H_{n}$, with $L^{2}$ inner product, with radius $2R_{0}$, i.e, $\Phi: \bar{B}(0, 2R_{0})\rightarrow H_{n}$ where
		\begin{align*}
			\Phi(w)=-P_{n}(ww_{x})-w_{xxx}-\gamma w+P_{n}f.
		\end{align*}	
$\Phi$ is a continuous map since it is essentially a quadratic map. Let $|w|=2R_{0}$. Then
   	\begin{align*}
		(\Phi(w),w)&=-\gamma |w|^{2}+(f,w)\leq -\gamma |w|^{2}+|f||w| \\
				 &\leq |w|( -\gamma |w|+|f|)< 0.
	\end{align*}
Thus applying the above proposition for $R=2R_{0}$, we conclude that there exists $w^{*}\in B(0, 2R_{0})$ such that $\Phi(w^{*})=0$; $\eqref{gal}$ has a solution $w^{*}=: u_{n}$. Now, we establish additional stronger norms on the solution of $u_{n}$ of $\eqref{gal}$. Take the $L^{2}$ inner product of $\eqref{gal}$ with $(u_{n})_{xxx}\in H_{n}$ and interpolate to get
   	\begin{align*}
		|(u_{n})_{xxx}|^{2}&= -\int u_{n}(u_{n})_{x}(u_{n})_{xxx} -\gamma \int u_{n}(u_{n})_{xxx}+\int f(u_{n})_{xxx}\\
					     &\leq |u_{n}|_{\infty}|(u_{n})_{x}||(u_{n})_{xxx}|+ |f| |(u_{n})_{xxx}|\\
					     &\leq |u_{n}|^{\frac{1}{2}}|(u_{n})_{x}|^{\frac{1}{2}}|(u_{n})_{x}||(u_{n})_{xxx}|+ |f||(u_{n})_{xxx}|\\
					     &\leq c|u_{n}|^{\frac{3}{2}}|(u_{n})_{xxx}|^{\frac{3}{2}}+|f||(u_{n})_{xxx}|\\
					     &\leq cR_{0}^{\frac{3}{2}}|(u_{n})_{xxx}|^{\frac{3}{2}}+|f||(u_{n})_{xxx}|\\
					     &\leq \frac{cR_{0}^{6}}{4}+ \frac{3}{4}|(u_{n})_{xxx}|^{2}+2|f|^{2}+\frac{1}{8}|(u_{n})_{xxx}|^{2}\\
					     &=\frac{cR_{0}^{6}}{4}+2|f|^{2}+ \frac{7}{8}|(u_{n})_{xxx}|^{2}.
	\end{align*}
Thus, $|(u_{n})_{xxx}|^{2}\leq 2cR_{0}^{6}+16|f|^{2}:=R_{3}^{2}$. Hence $u_{n}$ satisfies the uniform in $n$ bounds
   	\begin{align*}
		|u_{n}|\leq R_{0}, \quad |(u_{n})_{xxx}|\leq R_{3}.
	\end{align*}
By the Sobolev embedding theorem, there exists a subsequence $u_{n_{j}}$ such that
   	\begin{align*}
		&u_{n_{j}} \rightarrow u^{*} \text{ weakly in } H^{3},\\
		&u_{n_{j}} \rightarrow u^{*} \text{ strongly in } H^{\sigma},
	\end{align*}
for all $\sigma\in [0, 3)$, in particular for $\sigma=2$, and $u^{*}\in H^{3}$ satisfies
   	\begin{align*}
		|u^{*}|\leq R_{0}, \quad |u^{*}_{xxx}|\leq R_{3}.
	\end{align*}
Let $\mathcal{V}:= \{ \text{all trigonometric polynomials with average zero} \}$ be a set of test functions. Let $\phi\in \mathcal{V}$, and take $n_{j}>\text{degree}\{\phi\}$. Then from $\eqref{gal}$, we have
	\begin{align*}
		- \frac{1}{2}(u_{n_{j}}^{2}, \partial_{x} \phi)- (u_{n_{j}}, \partial_{x}^{3} \phi)+\gamma (u_{n_{j}}, \phi)= (f, \phi).
	\end{align*}
Since $u_{n_{j}}\rightarrow u^{*}$ strongly in $\dot{L}^{2}$, we have
	\begin{align*}
		&\lim_{j\rightarrow \infty} - (u_{n_{j}}, \partial_{x}^{3} \phi)= - (u^{*}, \partial_{x}^{3} \phi), \\
		&\lim_{j\rightarrow \infty} \gamma (u_{n_{j}}, \phi)= \gamma (u^{*}, \phi).
	\end{align*}	
Now,
	\begin{align*}
		|(u_{n_{j}}^{2}-(u^{*})^{2}, \partial_{x}\phi)|&= |\int (u_{n_{j}}-u^{*})(u_{n_{j}}+u^{*})\partial_{x}\phi| \\
									 &\leq |u_{n_{j}}-u^{*}||u_{n_{j}}+u^{*}||\partial_{x}\phi|_{\infty}\\
									 &\leq 2R_{0}|u_{n_{j}}-u^{*}||\partial_{x}\phi|_{\infty}\rightarrow 0,
	\end{align*}
as $j\rightarrow \infty$, since $u_{n_{j}}\rightarrow u^{*}$ strongly in $\dot{L}^{2}$. Thus, $\lim_{j\rightarrow \infty} - \frac{1}{2}(u_{n_{j}}^{2}, \partial_{x} \phi)=- \frac{1}{2}((u^{*})^{2}, \partial_{x} \phi)$. Thus we have shown that $u^{*}\in \dot{H}^{3}$ satisfying
	\begin{align*}
		|u^{*}|\leq R_{0}, \quad |u_{xxx}^{*}|\leq R_{3},
	\end{align*}
and that $\eqref{KdVSS}$ is satisfied in the sense of distributions,
	\begin{align*}
		- \frac{1}{2}((u^{*})^{2}, \partial_{x} \phi)- (u^{*}, \partial_{x}^{3} \phi)+\gamma (u^{*}, \phi)= (f, \phi),
	\end{align*}
for every $\phi\in \mathcal{V}$. Since $u^{*}\in \dot{H}^{3}$, and since $H^{3}$ is an algebra, we have $(u^{*})^{2}\in H^{3}$ and $u^{*}u^{*}_{x}\in H^{2}$. Therefore, the equation
	\begin{align*}
 		u^{*}(u^{*})_{x}+(u^{*})_{xxx}+\gamma u^{*}=f
	\end{align*}
holds in $L^{2}$. Then,
	\begin{align*}
		(u^{*})_{xxx}=-u^{*}(u^{*})_{x}-\gamma u^{*}+f.
	\end{align*}
Since $f\in H^{2}$, and $u^{*}(u^{*})_{x}\in H^{2}$, we have $(u^{*})_{xxx}\in H^{2}$. This implies that $u^{*}\in H^{5}$. Thus
	\begin{align*}
		u^{*}(u^{*})_{x}+(u^{*})_{xxx}+\gamma u^{*}=f \text{ holds in } H^{2}.
	\end{align*}
In particular, since $H^{2}\subset C^{1}$, the equation holds in the classical sense and satisfies the estimates above.  		

\section*{Acknowledgements}
The work of M.S.J. and T.S. have been partially supported by National Science Foundation (NSF) Grant Nos.DMS-1418911. The work of E.S.T. was  supported in part by the ONR grant N00014-15-1-2333 and the NSF grants DMS-1109640 and DMS-1109645.

\end{document}